\newcommand \datum {July 21, 2021}
\newcommand \addbibl{July 19, 2021}
\numberwithin{equation}{section}
\theoremstyle{plain}
 \newtheorem{theorem}{Theorem}[section]
 \newtheorem{lemma}[theorem]{Lemma}
 \newtheorem{proposition}[theorem]{Proposition}
\theoremstyle{definition}
 \newtheorem{definition}[theorem]{Definition}
 \newtheorem{remark}[theorem]{Remark}
\newcommand \At [1] {\textup{At}(#1)}
\newcommand \fprec {\mathrel{\prec_{\alg F}}}
\newcommand \gprec {\mathrel{\prec_{\alg G}}}
\newcommand \jtr [1]{f^\vee_{#1}}
\newcommand \mtr [1]{f^\wedge_{#1}}
\newcommand \ogmap {\overline\mu}
\newcommand \gmap {\mu}
\newcommand \lmap {\psi}
\newcommand \swedge[1]{\mathop{\wedge\kern-2pt{}_#1}}
\newcommand \svee[1]{\mathop{\vee\kern-2pt{}_#1}}
\newcommand \sbigwedge[1]{\bigwedge\kern-2pt{}_#1}
\newcommand \sbigvee[1]{\bigvee\kern-3pt{}_#1}
\newcommand \nothing [1] {}
\newcommand \dupl[1] {(the same as \cite{#1})}
\newcommand \con {\textup{con}}
\newcommand \defiff {\overset{\textup{def}}\iff}
\newcommand \nonparallel {\mathrel{\not{\kern-1.4pt{\mathord\parallel}} }}
\newcommand \wha [1] {{\alg{#1}^\ast}}
\newcommand \Lat [1]{\textup{Lat}(#1)}
\newcommand \Geom [1] {\textup{Geom}(#1)}
\newcommand \mclf {\textup{cl}_{\alg F}}
\newcommand \clf[1] {\textup{cl}_{\alg F}(#1)}
\newcommand \clFL[1] {\textup{cl}_{\alg F_L}(#1)}
\newcommand \cll[1] {\textup{cl}_{L}(#1)}
\newcommand \dwn {\mathord{\downarrow}}
\newcommand \Pow[1]{\textup{Pow}(#1)}
\newcommand \width[1]{\textup{width}(#1)}
\newcommand \then {\Rightarrow}
\newcommand \tbf[1]  {\textbf{#1}} 
\newcommand \alg[1]  {\mathcal #1}
\newcommand \Nnul {\mathbb N_0}
\newcommand \Jir [1] {\textup J(#1)} 
\newcommand \Mir [1] {\textup M(#1)} 
\newcommand \Nplu {\mathbb N^+}
\newcommand \set [1]{\{#1\}}
\newcommand \id [1] {\textup{id}_{#1}}
\renewcommand \phi{\varphi}
\newcommand \restrict [2] {#1\rceil_{\kern -1pt #2}}
\newcommand \Con [1]   {\textup{Con}(#1)}
\newcommand \ideal [1] {\mathord{\downarrow}#1}
\newcommand \filter [1] {\mathord{\uparrow}#1}
\newcommand \sideal [2] {\mathord{\downarrow}_{\kern-1pt#1}\kern 1pt #2}
\newcommand \sfilter [2] {\mathord{\uparrow}_{\kern-1pt#1}\kern 1pt#2}
\newcommand \sfolter [2] {\mathord{\Uparrow}_{\kern-1pt#1}\kern 1pt #2}
\newcommand \odeal [1] {\mathord{\Downarrow}#1}
\newcommand \sodeal [2] {\mathord{\Downarrow}_{\kern-1pt#1}\kern 1pt #2}
\newcommand \length [1]   {\textup{length}(#1)}
\newcommand \optional [1] {{#1}}
\newcommand\red[1]{{\textcolor{red}{#1}}}
\begin{document}
\title[Faigle geometries and semimodular lattices]
{Revisiting Faigle geometries from a perspective of semimodular lattices}

\author[G.\ Cz\'edli]{G\'abor Cz\'edli}
\email{czedli@math.u-szeged.hu}
\urladdr{http://www.math.u-szeged.hu/~czedli/}
\address{ Bolyai Institute, University of Szeged, Hungary}

\begin{abstract}  
In 1980, U. Faigle introduced  a sort of finite geometries on posets that are in bijective correspondence with finite semimodular lattices. His result has almost been forgotten in lattice theory. Here we simplify the axiomatization of these geometries, which we call \emph{Faigle geometries}. 
To exemplify their usefulness, we give a \emph{short} proof of a theorem of  Gr\"atzer and E. Knapp (2009) asserting that each slim semimodular lattice $L$ has a congruence-preserving extension to a slim rectangular lattice of the same length as $L$. As another application of Faigle geometries, we give a short proof of G.\ Gr\"atzer and E.\,W.\ Kiss' result from 1986 (also proved by M.\ Wild in 1993, the present author and E.\,T.\ Schmidt in 2010, and B. Skublics in 2013) that each finite semimodular lattice $L$ has an extension to a geometric lattice of the same length as $L$.
\end{abstract}

\thanks{This research of the first author was supported by the National Research, Development and Innovation Fund of Hungary under funding scheme K 134851.}

\subjclass {Primary: 06C10, secondary: 51D25, 51E99}

\keywords{Faigle geometry, semimodular lattice, planar semimodular lattice, rectangular lattice, congruence-preserving extension, slim semimodular lattice, geometric lattice, cover-preserving extension}

\date{\datum\hfill{{Hint: check the author's website for preprints and possible updates}}}

\maketitle

\section{Introduction, goal, and motivation}\label{sect:intro}

Although we postpone some of the necessary definitions to later sections, note that this paper is (intended to be) self-contained for all readers familiar with the rudiments of lattice theory, and  only few new concepts will be introduced or recalled.    

The antecedents of the paper belong to two categories.
First, in 1980, Faigle \cite{faigle} introduced several versions of geometries on finite posets (that is, on finite partially ordered sets) including pregeometries, geometries, and proper geometries. 
His proper geometries, which we call \emph{Faigle geometries}, are in bijective correspondence with finite semimodular lattices.

Second, we mention two famous results on length-preserving embeddings of semimodular lattices. The first  is due to  Gr\"atzer and Kiss~\cite[Lemma 17]{ggkiss}, 1986, and it was also proved by Wild~\cite[Theorem 4]{wild}, Cz\'edli and Schmidt~\cite{czgscht2geom}, and Skublics~\cite{skublics}. The second result was proved by Gr\"atzer and Knapp \cite{gratzerknapp3} in 2009.

\begin{theorem}[Gr\"atzer and Kiss~\cite{ggkiss}]\label{thm:ggkiss}
Each finite semimodular lattice has a length-preserving embedding into a finite geometric lattice.
\end{theorem}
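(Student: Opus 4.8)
The plan is to rely on the Faigle-geometry description of $L$ and then ``flatten'' its underlying poset into an antichain. By the Faigle correspondence recalled above, $L$ is isomorphic to the lattice $\Lat{\alg F}$ of closed sets of a Faigle geometry $\alg F$ whose ground poset is $P=\Jir L$, the closed set attached to $x\in L$ being $C_x:=\ideal x\cap P$; moreover $\length L$ is the $\alg F$-rank of $P$ (the top closed set $C_1$), and the fact that $\alg F$ is a \emph{proper} geometry is exactly the semimodularity of $L$. Since the Faigle geometries carried by an antichain are precisely the simple matroids and their lattices of closed sets are precisely the geometric lattices, it suffices to produce a matroid $\alg G$ of rank $\length L$ together with a length-preserving embedding of $\Lat{\alg F}$ into $\Lat{\alg G}$.

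The sole obstruction to $L$ being geometric is that $P$ need not be an antichain: the quantity $\sum_{p\in P}\bigl(\textup{ht}_L(p)-1\bigr)$ vanishes iff every join-irreducible of $L$ is an atom, iff $L$ is atomistic, iff $L$ is geometric. I would remove it by blowing up every point of $\alg F$. Put $\widehat P:=\{(p,i):p\in P,\ 1\le i\le\textup{ht}_L(p)\}$, regarded as an antichain, and let $\alg G$ be the matroid on $\widehat P$ obtained from $\alg F$ by replacing each point $p$ by $\textup{ht}_L(p)$ points placed freely inside the $\alg F$-flat it generates; thus every $(p,i)$ is an atom of $\alg G$, and the closure of a union $\bigcup_{p\in D}F_p$ of whole fibres $F_p:=\{(p,1),\dots,(p,\textup{ht}_L(p))\}$, taken over a subset $D\subseteq P$, equals $\{(q,i):q\le\bigvee D\}$ and has $\alg G$-rank equal to the $\alg F$-rank of $\clf D$, namely $\textup{ht}_L(\bigvee D)$. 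Now define $\iota\colon L\to\Lat{\alg G}$ by $\iota(x):=\{(p,i):p\le x\}=\bigcup_{p\in C_x}F_p$. Since $\bigvee C_x=x$, the set $\iota(x)$ is $\alg G$-closed and has $\alg G$-rank $\textup{ht}_L(x)$; in particular $\iota(1)=\widehat P$, so $\Lat{\alg G}$ has length $\length L$ and $\iota$ carries covers to covers.

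It remains to check that $\iota$ is a lattice embedding onto a sublattice of $\Lat{\alg G}$. Injectivity and the reflection of order are immediate from $x=\bigvee C_x$, and meets are preserved because $\iota(x)\cap\iota(y)=\iota(x\wedge y)$ is again a $\alg G$-flat. The one substantive point is join-preservation: $\iota(x)\cup\iota(y)$ is the union of the whole fibres over $C_x\cup C_y$, so by the fibre property above its closure has $\alg G$-rank equal to the $\alg F$-rank of $\clf{C_x\cup C_y}$; but $\clf{C_x\cup C_y}=C_{x\vee y}$ because the closed sets of $\alg F$ form the lattice $L$, so $\overline{\iota(x)\cup\iota(y)}$ and $\iota(x\vee y)$ are $\alg G$-flats of the same rank with the former contained in the latter, whence they coincide. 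I expect the real work --- the main obstacle --- to be the two facts about the blow-up that I used on faith: that $\alg G$ is a genuine matroid of rank $\length L$, and that a union of whole fibres closes up as claimed. Both ought to fall out of the simplified Faigle axioms, since the ``poset-submodularity'' built into a proper geometry becomes ordinary submodularity of the rank function of $\alg G$ after the free expansion, and the properness axiom is exactly what allows closures of fibre-unions to be read off inside $\alg F$. Granting these, $\iota\colon L\hookrightarrow\Lat{\alg G}$ is the required length-preserving embedding into a finite geometric lattice.
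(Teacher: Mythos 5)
Your strategy---blow each $p\in\Jir L$ up into $\textup{ht}_L(p)$ ``free'' copies, pass to the induced matroid on the resulting antichain $\widehat P$, and map $x\mapsto\bigcup_{p\le x}F_p$---is genuinely different from the paper's. The paper never leaves the world of Faigle geometries: it picks a \emph{maximal} non-atom $a\in\Jir L$, detaches $a$ from the order of $P$, enlarges the closure system by the sets $X\cup\{a\}$ for the flats $X$ in a carefully chosen family $\alg T$, verifies (CP) for the new pair $(Q,\alg G)$, applies Wild's lemma, and iterates; the number of comparabilities in $\Jir K$ strictly drops at each step, and one even gets $|\At K|=|\Jir L|$. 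Your route is instead the classical Crawley--Dilworth/Gr\"atzer--Kiss polymatroid expansion, which the paper explicitly acknowledges as an alternative proof; it produces $\sum_p\textup{ht}_L(p)$ atoms rather than $|\Jir L|$, which is harmless for the theorem as stated.

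The difficulty is that the two facts you say you ``used on faith'' are exactly where the entire content of the theorem sits, so as written the proof has a genuine gap. ``Replacing each point $p$ by $\textup{ht}_L(p)$ points placed freely inside the flat it generates'' is not a definition: $\alg F$ is not a matroid, so free placement has no prior meaning, and you never specify the closure or rank of an \emph{arbitrary} subset of $\widehat P$, only of unions of whole fibres---whereas the matroid axioms quantify over all subsets. To close the gap you would need to (i) prove that $f(D):=\textup{ht}_L(\bigvee D)$ is a nondecreasing submodular function on $\Pow P$ (this is the step where semimodularity of $L$ actually enters, via $\textup{ht}(x\vee y)+\textup{ht}(x\wedge y)\le\textup{ht}(x)+\textup{ht}(y)$ together with $\bigvee(D_1\cap D_2)\le\bigvee D_1\wedge\bigvee D_2$); (ii) define $r(S):=\min_{D\subseteq P}\bigl(f(D)+|S\setminus\bigcup_{p\in D}F_p|\bigr)$ and verify that this is a matroid rank function on $\widehat P$; and (iii) show that for $S$ a union of fibres over $D_0$ the minimum equals $\textup{ht}_L(\bigvee D_0)$ and the closure of $S$ is $\bigcup_{q\le\bigvee D_0}F_q$. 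All of this is doable, but none of it is routine relative to what you wrote, and your appeal to ``poset-submodularity built into a proper geometry'' is a hope, not an argument---indeed, once filled in this way your proof uses only the height function of $L$ and makes no use of the Faigle axioms (FEP)/(CP) at all.
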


\begin{theorem}[Gr\"atzer and Knapp \cite{gratzerknapp3}]\label{thm:gknapp}
For each slim semimodular lattice $L$ with at least three elements  there exists a congruence-preserving extension $K$ of $L$ such that $K$ is a slim rectangular lattice and it is of the same length as $L$.
\end{theorem}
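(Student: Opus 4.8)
The plan is to carry out the whole argument on the side of Faigle geometries. First I would attach to $L$ its Faigle geometry $\alg F=(P,\mathrm{cl})$, where $P$ is order-isomorphic to $\Jir{L}$ and the $\mathrm{cl}$-closed subsets of $P$, ordered by inclusion, form a copy of $L$; slimness of $L$ says exactly that $P$ has width at most two, so $P$ is the union of a ``left chain'' and a ``right chain'', and $\length{L}$ is the rank of $P$. I would also group the prime intervals of $L$ into the usual trajectories (maximal runs of pairwise consecutive prime intervals), since these are what make $\Con{L}$ combinatorial. Dually, I would record the Faigle-geometric fingerprint of a slim rectangular lattice $R$: its poset has width exactly two with the two chains mutually incomparable, each of them ``used up'' by the closure so that $R$ carries precisely one doubly irreducible element on each of its two boundary chains and these are complementary, and --- this is what matters below --- every trajectory of $R$ runs all the way from the left boundary to the right boundary.

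Next comes the construction of $K$. Starting from $\alg F=(P,\mathrm{cl})$, I would build a larger Faigle geometry $\alg G=(P',\mathrm{cl}')$ by adjoining finitely many new points to $P$, placed so as to prolong the existing two chains and, in the planar picture of $L$, to ``square off'' the upper-left and upper-right corners until the outline is a rectangle. The governing constraint is that $\mathrm{cl}'$ be chosen so that $\mathrm{rank}(P')=\mathrm{rank}(P)$ --- equivalently, in the planar picture, every new element is inserted at the level of an existing one, so that every new prime interval of $K$ is consecutive to an old one. This guarantees at once that $\length{K}=\length{L}$ and that the $\mathrm{cl}$-closed subsets of $P$ sit inside the $\mathrm{cl}'$-closed subsets of $P'$, giving a lattice embedding $L\hookrightarrow K$; slimness of $K$ is then immediate from the width-two condition on $P'$ and rectangularity from the corner and incomparability conditions built into the construction.

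The last and genuinely substantial step is to check that restriction yields an isomorphism $\Con{K}\to\Con{L}$, and here I would match trajectories. Since $L$ and $K$ have the same length, the embedding $L\hookrightarrow K$ is cover-preserving, so each trajectory of $L$ is contained in a unique trajectory of $K$; because, conversely, every new prime interval of $K$ is consecutive to an old one and the construction is arranged so that no two trajectories of $L$ get fused, this assignment is a bijection between the trajectories of $L$ and those of $K$. Consequently the datum ``which prime intervals are collapsed'' transports bijectively in both directions: a new covering step of $K$ is forced into any congruence collapsing the old step it is consecutive to, which gives injectivity of the restriction, while any congruence of $L$ propagated along the (merely lengthened) trajectories of $K$ is compatible with $\mathrm{cl}'$, which gives surjectivity. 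I expect the real difficulty to be the construction itself: squaring off the corners with the fewest new points and extending the closure in the unique rank-preserving way, because it is exactly this minimality that makes the trajectories of $K$ restrict cleanly to those of $L$ --- and hence that keeps the congruence bookkeeping, and the whole proof, short.
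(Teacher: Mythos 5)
Your overall strategy (pass to the Faigle geometry of $L$, enlarge it without increasing the rank, then transfer congruences back) is the right one, but the proposal has two genuine gaps, and the first is fatal as stated. The obstruction to rectangularity is the presence of comparabilities \emph{between} the two chains of $\Jir L$: a slim lattice is rectangular exactly when $\Jir L$ is a union of two nonempty chains that are elementwise incomparable, i.e., when its comparability graph is a disjoint union of two cliques. Any induced subposet of such a poset again has this form (or is a single chain). Since your construction adjoins new points to $P$ while keeping $P$ as an induced subposet of $P'$, the cross-comparabilities of $\Jir L$ survive into $P'$, so $P'$ can never be the join-irreducible poset of a slim rectangular lattice unless $L$ was already rectangular or a chain; ``square off the corners'' names the goal, not a construction. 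What the paper actually does in each step is the opposite of adding points: it keeps the underlying set of $P=\Jir L$, \emph{deletes} one covering relation $b\prec_P a$ between the two chains to get a weaker order $Q$, and adjoins the single new flat $\sideal Q a$ to $\alg F$. In the resulting lattice $K$ the old join-irreducible $a$ becomes join-reducible and is replaced by the doubly irreducible element $\sideal Q a$, so $|K|=|L|+1$ while the number of cross-comparabilities strictly drops; iterating (after a trivial preliminary step when $L$ is a chain, a case you do not treat) reaches rectangularity.

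The second gap is in the congruence bookkeeping. A bijection between the trajectories of $L$ and those of $K$ does not by itself give $\Con K\cong\Con L$: in a slim semimodular lattice the set of prime intervals collapsed by a congruence is a union of trajectories, but not every union of trajectories arises, so you would also have to show that the forcing quasiorder among trajectories is unchanged by the extension --- exactly the point that ``the construction is arranged so that no two trajectories of $L$ get fused'' leaves unverified, and which cannot be checked without the missing construction. The paper avoids trajectories altogether: since each step adds exactly one doubly irreducible element $d$ sitting in a covering square $a\prec c\prec b$, $a\prec d\prec b$ with $a\in\Mir L$ and $b\in\Jir L$, the Corner Lemma (Lemma~\ref{lemma:corner}) delivers the congruence-preserving property by a short direct computation with join and meet translations. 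To salvage your route you would need (i) a one-relation-deletion step of the above kind in place of point adjunction, and (ii) either the Corner Lemma or a verified statement about trajectory forcing.
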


\subsection*{Goal} 
We simplify the axiomatization of Faigle geometries; see Definitions
~\ref{def:lFglG} and  \ref{def:fgMtry}. 
We recall a lemma from Wild~\cite{wild} and give a simple proof for a lemma taken from Cz\'edli~\cite{czgreprhomo98}. 
Finally, as an application of Faigle geometries and the two lemmas just mentioned, we present 
a new and \emph{short} proof for each of Theorems~\ref{thm:ggkiss} and  \ref{thm:gknapp}.

\subsection*{Outline} The rest of this section gives  more details of our motivation together with some bibliographic references. Section~\ref{sect:faigle} contains our definitions and view of Faigle geometries in a self-contained way; see Definitions~\ref{def:lFglG} and \ref{def:fgMtry},  Theorem~\ref{thmfaigle}, and Lemma~\ref{lemma:smhDf}. Section~\ref{sect:2lemmas} states Lemmas~\ref{lemma:nvRgJzsj} and \ref{lemma:corner} and proves the second one. 
Sections~\ref{sect:glats} and \ref{sect:defproof} define some  concepts and prove  Theorems~\ref{thm:ggkiss} and \ref{thm:gknapp}, respectively.

\subsection*{More about our motivation}
The bijective correspondence between Faigle geometries and finite semimodular lattices is important. I fully agree with Quackenbush~\cite{qbushMR} that Faigle's ``work should prove to be useful in
the analysis of semimodular lattices.'' However, in spite of Quackenbush's initiative,  the above-mentioned bijective correspondence has hardly been exploited in lattice theory and it has almost completely been forgotten by now. 
This is indicated by a \addbibl{} MathSciNet search ``Anywhere=(Faigle and semimodular and geometry)'', which only returned four matches (not counting those two where ``Faigle'' only occurs as the reviewer), and all these four matches are from the period 1980--1986. As a possible reason, we mention that while defining several variants of combinatorial structures, Faigle~\cite{faigle} did not simplify the definition of those structures, the Faigle geometries, that we need here.

As we have already mentioned, Wild~\cite[Theorem 4]{wild} also proves Theorem~\ref{thm:ggkiss}. His proof is short but assumes  familiarity with matroid theory. 
As opposed to matroids, finite semimodular lattices are in bijective correspondence with Faigle geometries. Hence, it seems natural to give a \emph{short and self-contained} proof based on these geometries; we do so in Section~\ref{sect:glats}. 
It is worth mentioning that, as Wild~\cite{wild} points out, even a 1973 construction by Crawley and Dilworth~\cite[Theorem 14.1]{crawleydilworth} yields Theorem~\ref{thm:ggkiss}; see also the  historical comments in Cz\'edli and Schmidt~\cite[Section 4]{czgscht2geom} and in Skublics~\cite[Section 1]{skublics}. Note that
Skublics~\cite[Corollary 2]{skublics} gives a highly nontrivial generalization of Theorem~\ref{thm:ggkiss} for infinite lattices; his  proof is long and involved.
{
Note that Theorem~\ref{thm:ggkiss} is only a consequence of stronger results proved in Cz\'edli and Schmidt~\cite{czgscht2geom},  Gr\"atzer and Kiss~\cite{ggkiss}, and Skublics~{skublics}, but 
the proofs given there would remain long and involved even if they were tailored to Theorem~\ref{thm:ggkiss}.}

Slim semimodular lattice were introduced by Gr\"atzer and Knapp~\cite{gratzerknapp1} in 2007. These lattices are finite and necessarily \emph{planar}. Four dozen papers (including the present one) have been devoted to these lattices and their applications since then; see Cz\'edli and Gr\"atzer~\cite{czgggltsta}, Cz\'edli and Kurusa~\cite{czgkurusa}, the ``mini-survey'' subsection of Cz\'edli~\cite{czg-slimpatchabsretr}, and their references for most of these four dozen papers.
\optional{See also the Appendix (Section~\ref{sect:addbibl}) or 
\texttt{www.math.u-szeged.hu/\textasciitilde{}czedli/m/listak/publ-psml.pdf}.}
Slim rectangular lattices were also introduced by Gr\"atzer and Knapp but in another paper, \cite{gratzerknapp3}. These lattices play a central role in the theory of planar semimodular lattices; 
partly because of Theorem~\ref{thm:gknapp}.

The original proof of Theorem \ref{thm:gknapp} as well as those of many other results on   planar semimodular lattices are \emph{visual}. The advantage of this visual feature  is that lots of results on  planar semimodular lattices have been found  in a short time. However, there is some disadvantage, too:  visual proofs rely on many earlier results and the reader often has to look into many earlier papers, including Kelly and Rival's fundamental  \cite{kellyrival}, if he wants to \emph{really verify} these proofs.
The  proof of Theorem \ref{thm:gknapp} here is algebraic, easy to verify, shorter than the original one, and we present it in a self-contained paper. 
Not relying on geometric intuition, it might be easier to generalize the present proof for higher dimensions in the future than the earlier one based on planar geometrical tools.

\section{Faigle geometries}\label{sect:faigle}
Let $P$ be a poset. If $X\subseteq P$ such that for any $x\in X$ and $y\in P$, $y\leq x\then y\in X$, then $X$ is a \emph{down-set} of $P$. As  usual, for $u\in P$, the \emph{$($principal$)$ down-set}  $\set{x\in L: x\leq u}$ and the  the \emph{$($principal$)$ up-set} $\set{x\in L: x\geq u}$ are denoted by $\ideal u=\sideal P u$ and $\filter u=\sfilter P u$, respectively. Also, we denote $\sideal P u\setminus\set u$ by $\sodeal P u$ or, if no ambiguity threatens, by $\odeal u$. Thinking of the Hasse diagram of $P$, the notations 
\[ \sodeal P u := \set {x\in P:  x< u},\quad    \sideal P u := \set {x\in P:  x\leq u},\quad \sfolter P u := \set {x\in P:  x> u}
\]
are quite visual; e.g., both $\sideal P u$ and $\sodeal P u$ consist of  elements below or  equal to $u$ but the \emph{double} arrow reminds us that $\sodeal P u$ consists of elements \emph{strictly} below $u$.
Note that it will frequently occur that $u$ belongs to several posets; then the subscript of the vertical arrow is vital to make it clear that, say,  $\sideal B u$ is a subset of $B$.

Next, for a set $A$, a subset $\alg T$ of the \emph{power set} $\Pow A=\set{X: X\subseteq A}$ of  $A$ is a \emph{closure system on $A$} if $A\in\alg T$ and $\alg T$ 
is closed with respect to arbitrary (not only finitary) intersections. Closure systems are sometimes called \emph{Moore families}. With respect to ``$\subseteq$'', they are well known to be complete lattices. For $X,Y\in \alg T$, we denote by $X\prec Y$ or $X\prec_{\alg T} Y$ that $Y$ covers $X$ in $\alg T$, that is,
$X\subset Y$ but there is no $Z\in \alg T$ such that $X\subset Z\subset Y$. (As it is usual in  lattice theory, ``$X_1\subset X_2$'' means the conjunction of  ``$X_1\subseteq X_2$'' and  ``$X_1\neq X_2$''.)
Our first definition of Faigle geometries is quite simple. Since this is what we can conveniently use when studying finite semimodular lattices, we take the liberty to call it ``lattice theoretical''.

\begin{definition}[A lattice theoretical definition of Faigle geometries]\label{def:lFglG}
Given a finite poset $P$ and a subset $\alg F$ of the power set $\Pow P=\set{X: X\subseteq P}$ of  $P$,  the structure $(P,\alg F)$ is a \emph{Faigle Geometry} if
\begin{enumerate}
\item[(F$\cap$)] $P\in\alg F$ and  $\alg F$ is $\cap$-closed, that is, for all $X,Y\in\alg F$, we have that $X\cap Y\in\alg F$; 
\item[(F$\dwn$)] every member of $\alg F$ is a down-set of $P$;
\item[(Pr)] $\emptyset\in \alg F$ and for each $u\in P$, both $\sideal P u$ and $\sodeal P u $ belong to $\alg F$; and
\item[(CP)] for any $u\in P$ and $X\in \alg F$ such that $u\notin X$ and $\sodeal P u\subseteq X$, there exists a $Y\in \alg F$ such that $X\fprec Y$ and $u\in Y$.
\end{enumerate} 
In other words, by a Faigle geometry we mean a pair  $(P,\alg F)$ of a finite poset $P$ and a closure system $\alg F$ on $P$ satisfying (F$\dwn$),  (Pr), and (CP). 
\end{definition}

\begin{remark}\label{rem:uniqY}
 Note that ``there exists a $Y\in \alg F$'' in (CP)
can be replaced by ``there exists a unique $Y\in \alg F$''.
\end{remark}

\optional{
To see the validity of Remark~\ref{rem:uniqY}, observe that if we had distinct $Y_1$ and $Y_2$ satisfying the requirements of (CP), then $u\in Y_1\cap Y_2=X$ would be a contradiction.}

Geometries as combinatorial structures are usually defined with the help of closure operators. Hence, recall that a map $f\colon \Pow A\to\Pow A$ is a \emph{closure operator on $A$} if $X\subseteq Y\in\Pow A$ implies that $X\subseteq f(X)\subseteq f(Y)=f(f(Y))$.
There is a well-known bijective correspondence between the set of closure operators on $A$ and the set of  closure systems on $A$. Namely,
the closure operator associated with a closure system $\alg T$ is defined by $X\mapsto \bigcap\set{Y\in\alg T: X\subseteq X}$ while the
closure system corresponding to a closure operator $f$ is $\set{X\in \Pow A:  f(X)=X}$.

\begin{definition}[Second definition of Faigle geometries]\label{def:fgMtry}
Given a finite poset $P$ and a set  $\alg F\subseteq \Pow P$,  the structure $(P,\alg F)$ is a \emph{Faigle geometry} if, with the notation
\begin{equation}
\mclf\colon \Pow P\to\Pow P, \quad\text{defined by}\quad X\mapsto \bigcap\set{Y\in\alg F: X\subseteq Y},
\end{equation}
$(P,\alg F)$ satisfies  (F$\cap$), (F$\dwn$),  (Pr), and 
\begin{enumerate}
\item[(FEP)] for any  $u,v\in P$ and $S\in \alg F$, if $u\notin S$, $v\notin S$, $\odeal u \subseteq S$, and $v\in\clf{S\cup\set u}$,   then $u\in\clf{S\cup\set v}$.
\end{enumerate} 
\end{definition}

We are going to show that Definitions~\ref{def:lFglG} and \ref{def:fgMtry} are equivalent and, furthermore, each of these two definitions defines what Faigle~\cite{faigle} called ``proper geometries''. With reference to a more general class of geometries (and pregeometries), Faigle~\cite{faigle} defined ``proper geometries'' in a more complicated way. We prefer to call his 
``proper geometries'' as \emph{Faigle geometries} since, from the perspective of lattice theory, we consider them the most important structures defined in Faigle~\cite{faigle}.

Instead of recalling Faigle's definition and explaining directly why ours is equivalent to his,
we are going to restate and prove his theorem as Theorem~\ref{thmfaigle} based on Definition~\ref{def:fgMtry}. The reasons of this strategy are the following. 
First, this proof makes the paper self-contained and easier to read, especially if the reader wants to read the proof based on \emph{our} definition.
Second, some notations occurring in this theorem are needed later.
Third, it will automatically follow from Theorem~\ref{thmfaigle}  that Definition~\ref{def:fgMtry} is equivalent to the one in Faigle~\cite{faigle}. 
Fourth, the proof is not very long. 
\optional{Even if we 
presented the  original definition from Faigle~\cite{faigle} and showed why the two definitions are equivalent rather than giving a proof here, then we could save not more than a single page while creating some inconvenience for the reader.} 
Fifth, armed with Theorem~\ref{thmfaigle}, it will be easy to show that Definitions~\ref{def:lFglG} and \ref{def:fgMtry} are equivalent.%

Note that Stern~\cite[page 234]{stern} gives an account of what  Faigle~\cite{faigle} has done and contains some historical comments. 
Note also that the notation (FEP), which was denoted by (GEP) in Faigle~\cite{faigle}, comes from ``Faigle Exchange Property''. The acronym (CP) comes from ``covering property" while (Pr) about principal down-sets and their ``beheaded versions'' reminds us to ``principal''. Note also that, trivially, we can omit the stipulation ``$u\notin S$'' from (FEP) without changing the concept determined by Definition~\ref{def:fgMtry}.

A  lattice $L$ is (upper) \emph{semimodular} if, for all $x,y\in L$,  $x\wedge y\prec x \then y\prec x\vee y$. The  poset of nonzero join-irreducible elements of $L$ will be denoted by $\Jir L$.

\begin{definition}\label{def:fGccRspd}
For a finite semimodular lattice $L$, we define the \emph{Faigle geometry associated with} $L$ as
\begin{equation}\Geom L:=(\Jir L, \set{\Jir L\cap \ideal x: x\in L}); \text{ we also denote it by }(P_L,\alg F_L).
\label{eq:mtcnzsVMfgTpnLnH}
\end{equation}
To ease the notation,  $\clFL X$ in the sense of (FEP) will be written as  $\cll X$.
For a Faigle geometry $\wha F:=(P,\alg F)$ in the sense of Definition~\ref{def:fgMtry}, we define the \emph{lattice associated with} $\wha F$,  also called the \emph{lattice of flats} of $\wha F$,  as
\begin{equation} \Lat {\wha F}:=(\alg F,\subseteq).
\label{eq:ksmbmghgnKszGTrz}
\end{equation}
For Faigle geometries (no matter in which sense) $\wha F:=(P,\alg F)$ and $\wha {F'}:=(P',\alg F')$, we say that these geometries are \emph{isomorphic}, in notation $\wha F\cong\wha {F'}$, 
if there is a poset isomorphism $\gmap\colon P\to P'$ such that $\alg F'=\set{\ogmap(X): X\in \alg F}$ where 
$\ogmap$ is the map $\Pow P\to\Pow {P'}$ defined by 
$\ogmap(X)=\set{\gmap(p): p\in X}$. If there is such a $\gmap$, then $\ogmap$ and $\gmap$ determine each other and we use  the terminology
that $\ogmap\colon\wha F\to \wha{F'}$ is an \emph{isomorphism}.
\end{definition}

According to the following theorem, 
Faigle geometries and finite semimodular lattices are different
faces of the same entities in the following canonical way. 
Apart from slight differences in definitions, the following result is due to Faigle~\cite{faigle}.

\begin{theorem}[{Faigle \cite[Thm.\ 1(c) and Lemma 1]{faigle}}]\label{thmfaigle}
 If  $L$ is a finite semimodular lattice and $\wha F:=(P,\alg F)$ is a Faigle geometry in the sense of Definition~\ref{def:fgMtry}, then the following assertions hold.

\textup{(A)} $\Geom L$ is a Faigle geometry in the sense of Definition~\ref{def:fgMtry}. Also, for $X\subseteq 
P_L=\Jir L$, we have that  $\cll X=\Jir L\cap\sideal L {\sbigvee L X}$.

\textup{(B)} $\Lat {\wha F}$ is a finite semimodular lattice.
For elements $X$ and $Y$ of this lattice, that is, for $X,Y\in\alg F$, we have that $X\wedge Y=X\cap Y$, $X\vee Y=\clf{X\cup Y}$, and 
\begin{equation}
X\prec Y\,\, \iff \,\,(\exists u\in Y\setminus X)\,(\sodeal P u \subseteq X\text{ and }Y=\clf{X\cup\set u}.
\label{eq:thmBhLtt}
\end{equation}

\textup{(C)}  $\Lat{\Geom L}\cong L$ and the map $\lmap\colon L\to \Lat{\Geom L}$ defined by $\lmap(x):=\Jir L\cap \sideal L x$ is an isomorphism.

\textup{(D)} $\Geom{\Lat {\wha F}}\cong \wha F$ and, with
$\wha{F'}=(P',\alg F'):= \Geom{\Lat{\wha F}}$ and 
$\gmap\colon P\mapsto P'$ defined by $\gmap(u)=\sideal P u$, 
$\,\,\ogmap\colon \wha F\to \wha{F'}$ is an isomorphism.
\end{theorem}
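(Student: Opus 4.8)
The plan is to prove the four assertions more or less in the order (A), (B), (C), (D), since each later part leans on the earlier ones. Throughout I would freely use the standard dictionary between finite semimodular lattices and their join-irreducible posets: every element $x\in L$ is the join of the join-irreducibles below it, and $\sideal L x\cap\Jir L$ determines $x$. I would also recall the elementary fact that in a semimodular lattice, if $p\in\Jir L$ and $p^-$ denotes its unique lower cover, then $p\not\le x$ together with $p^-\le x$ forces $x\prec x\vee p$, which is the lattice-theoretic shadow of the covering condition (CP)/(FEP).

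\textbf{Part (A).} First I would check that $\alg F_L=\set{\Jir L\cap\ideal x:x\in L}$ is a closure system: $\Jir L=\Jir L\cap\ideal{1}$ lies in it, and since $\bigcap_i(\Jir L\cap\ideal{x_i})=\Jir L\cap\ideal{\bigwedge_i x_i}$ in a finite lattice, it is $\cap$-closed, giving (F$\cap$). Every member is visibly a down-set of $\Jir L$, so (F$\dwn$) holds. For (Pr): $\emptyset=\Jir L\cap\ideal 0$; and for $u\in\Jir L$, $\sideal{P_L}u=\Jir L\cap\ideal u$ directly, while $\sodeal{P_L}u=\Jir L\cap\ideal{u^-}$, where $u^-$ is the unique lower cover of $u$ in $L$ — here I use that $u$ join-irreducible means the join-irreducibles strictly below $u$ are exactly those $\le u^-$. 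Next I would establish the formula $\cll X=\Jir L\cap\sideal L{\sbigvee L X}$: the right side is a member of $\alg F_L$ containing $X$, and any member $\Jir L\cap\ideal x\supseteq X$ has $x\ge\bigvee X$, so it contains the right side; hence the right side is the least flat above $X$, which is $\cll X$. Finally (FEP): suppose $u,v\notin S$, $\odeal u\subseteq S$, and $v\in\cll{S\cup\set u}=\Jir L\cap\sideal L{(\bigvee S)\vee u}$. Writing $s=\bigvee S$, we have $v\le s\vee u$ but $v\not\le s$ (as $v\notin S$ and $S$ is already a flat, so $S=\Jir L\cap\ideal s$). Since $\odeal u\subseteq S$ gives $u^-\le s$, semimodularity yields $s\prec s\vee u$; since $s<s\vee v\le s\vee u$ and there is nothing strictly between $s$ and $s\vee u$, we get $s\vee v=s\vee u$, hence $u\le s\vee v$, i.e. $u\in\cll{S\cup\set v}$. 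That is (FEP).

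\textbf{Part (B).} Here I would show $\Lat{\wha F}=(\alg F,\subseteq)$ is a finite semimodular lattice with the stated operations. As a closure system it is already a complete lattice with $X\wedge Y=X\cap Y$ and $X\vee Y=\clf{X\cup Y}$, and it is finite since $P$ is finite. The crux is the covering description \eqref{eq:thmBhLtt} together with semimodularity. For the ``$\Leftarrow$'' direction of \eqref{eq:thmBhLtt}, I would use (CP)-type reasoning extracted from (FEP): given $u\in Y\setminus X$ with $\odeal u\subseteq X$ and $Y=\clf{X\cup\set u}$, suppose $X\subsetneq Z\subseteq Y$ with $Z\in\alg F$; pick $v\in Z\setminus X$; then $v\in Y=\clf{X\cup\set u}$ and $v\notin X$, and (FEP) (with $S=X$) gives $u\in\clf{X\cup\set v}\subseteq\clf Z=Z$, whence $Y=\clf{X\cup\set u}\subseteq Z$, so $Z=Y$ and $X\prec Y$. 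For ``$\Rightarrow$'': if $X\prec Y$ pick any $u\in Y\setminus X$; by (F$\dwn$) and (Pr), $\clf{X\cup\sideal P u}$ lies strictly between — wait, one must be careful to choose $u$ \emph{minimal} in $Y\setminus X$ so that $\odeal u\subseteq X$ (every element strictly below $u$ lies in $Y$ by (F$\dwn$), and by minimality not in $Y\setminus X$, hence in $X$); then $X\subsetneq\clf{X\cup\set u}\subseteq Y$, so $Y=\clf{X\cup\set u}$ by covering. Semimodularity of $(\alg F,\subseteq)$ then follows: assume $X\cap Y\prec X$; by the just-proved description there is $u\in X\setminus(X\cap Y)$, i.e. $u\in X\setminus Y$, with $\odeal u\subseteq X\cap Y\subseteq Y$ and $X=\clf{(X\cap Y)\cup\set u}$. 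I claim $Y\prec\clf{Y\cup\set u}=X\vee Y$; indeed $u\notin Y$, $\odeal u\subseteq Y$, so the $\Leftarrow$ direction of \eqref{eq:thmBhLtt} applies once we note $\clf{Y\cup\set u}=\clf{Y\cup X}$ (since $X=\clf{(X\cap Y)\cup\set u}\subseteq\clf{Y\cup\set u}$), giving $Y\prec X\vee Y$ as required.

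\textbf{Parts (C) and (D).} For (C), the map $\lmap(x)=\Jir L\cap\sideal L x$ lands in $\alg F_L$ by definition; it is order-preserving, injective because $x=\bigvee(\Jir L\cap\ideal x)$ in a finite lattice, and surjective onto $\alg F_L$ by definition of $\alg F_L$; an order-isomorphism between lattices is a lattice isomorphism, so $\Lat{\Geom L}\cong L$. For (D), set $\wha{F'}=(P',\alg F')=\Geom{\Lat{\wha F}}$, so $P'=\Jir{(\alg F,\subseteq)}$ and $\alg F'=\set{P'\cap\ideal Z:Z\in\alg F}$. I would first identify the join-irreducibles of $(\alg F,\subseteq)$: using \eqref{eq:thmBhLtt}, $Z\in\alg F$ is join-irreducible iff it has a unique lower cover iff $Z=\clf{\emptyset\cup\set u}=\clf{\set u}$ for a unique element $u$ with... more precisely I would show the assignment $u\mapsto\sideal P u$ is a bijection from $P$ onto $\Jir{(\alg F,\subseteq)}$ and an order-isomorphism (if $u\le w$ then $\sideal P u\subseteq\sideal P w$; conversely $\sideal P u\subseteq\sideal P w$ gives $u\in\sideal P w$, i.e. $u\le w$; join-irreducibility and the covering $\sodeal P u\prec\sideal P u$ from (Pr)+\eqref{eq:thmBhLtt} handle the rest, together with the observation that every join-irreducible flat is of this form). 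Thus $\gmap(u)=\sideal P u$ is a poset isomorphism $P\to P'$. It remains to check $\alg F'=\set{\ogmap(X):X\in\alg F}$, i.e. for $X\in\alg F$, $\ogmap(X)=\set{\sideal P u:u\in X}$ equals $P'\cap\ideal X=\set{\sideal P u:\sideal P u\subseteq X}=\set{\sideal P u:u\in X}$ (the last equality because each member of $\alg F$ is a down-set, so $u\in X\iff\sideal P u\subseteq X$) — which matches exactly. Hence $\ogmap\colon\wha F\to\wha{F'}$ is an isomorphism.

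\textbf{Main obstacle.} The routine part is the closure-system bookkeeping in (A) and (C); the genuinely delicate step is part (B), specifically deriving semimodularity of $(\alg F,\subseteq)$ purely from (FEP) — one must be disciplined about always choosing a \emph{minimal} new element $u\in Y\setminus X$ so that $\odeal u\subseteq X$, and about the algebraic identity $\clf{Y\cup\set u}=\clf{X\cup Y}$ that lets (FEP) convert ``$X\cap Y\prec X$'' into ``$Y\prec X\vee Y$''. Getting \eqref{eq:thmBhLtt} exactly right (both directions) is the linchpin on which both semimodularity and the computation of $\Jir{(\alg F,\subseteq)}$ in (D) rest.
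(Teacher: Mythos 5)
Your proposal is correct and follows essentially the same route as the paper: the same semimodularity argument (via the unique lower cover of a join-irreducible) for (FEP) in part (A), the same two-sided proof of the covering description \eqref{eq:thmBhLtt} with a minimal element of $Y\setminus X$ and an (FEP) application, the same transfer of a covering $X\cap Y\prec X$ to $Y\prec X\vee Y$, and the same identification of $\Jir{\Lat{\wha F}}$ with $\set{\sideal P u:u\in P}$ in part (D). The only differences are cosmetic (you use the unique lower cover $u^-$ where the paper uses $x_u=\bigvee\sodeal P u$, and you verify the isomorphism in (C) directly rather than exhibiting the inverse map).
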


Note that even if the formalism here is different, most steps of the proof  below can be found in Faigle~\cite{faigle}

\begin{proof}[Proof of Theorem~\ref{thmfaigle}]
To prove (A), let $L$ be a finite semimodular lattice, let $P$ be the poset $\Jir L$, and let $\alg F:=\set{\Jir L\cap \sideal L x: x\in L}$. We are going to understand $\vee$, $\bigvee$, and  $\wedge$ in $L$. 
With the possible exception of $\sodeal P u \in\alg F$, observe that 
(F$\dwn$), (F$\cap$), (Pr), and the description of 
$\cll X$  trivially hold.
Let $u\in P=\Jir L$ and $x_u:=\bigvee  \sodeal P u$. 
Then $x_u\leq u$ since $v < u$ for every joinand $v\in \sodeal P u$. 
In fact, $x_u<u$ since $u$ is join-irreducible. If $v\in \Jir L\cap\sideal L {x_u}$, then $v\in \sodeal P u$ since $v\leq_L x_u <_L u$. Conversely, if
$v\in\sodeal P u$, then $v\in \Jir L\cap\sideal L {x_u}$ since $v$ is a joinand of $x_u$. Hence, 
\begin{equation}\sodeal P u=\Jir L\cap\sideal L {x_u}\in \alg F, 
\label{eq:msGszhkSzrgmLSzR}
\end{equation}
as required. 
Next, to prove that $\Geom L$ satisfies (FEP), 
assume that $S=P\cap \sideal L s\in \alg F$, $u,v\in P\setminus S$ (whence $u,v\not\leq_L s$), 
$\sodeal P u\subseteq S$, and $v\in\clf{S\cup \set u}$.
By \eqref{eq:msGszhkSzrgmLSzR}, $x_u\leq s$. Since each element $y$  of $L$ is $\bigvee  (P\cap \sideal L y)$, it follows trivially that $x_u\prec_L u$. Hence, semimodularity and $u\not\leq s$ give  that $s=s\vee  x_u\prec_L s\vee  u$. Since $v\in\cll{S\cup \set u}$, the description of $\cll X$ gives that $v\leq_L \bigvee (S\cup \set u)=s\vee  u$. But $v\not\leq_L s$, so $s<_L s\vee  v\leq s\vee  u$. Combining this with $s \prec_L s\vee  u$, we obtain that $u\leq s\vee  u=s\vee  v$, whereby $u\in\cll{S\cup\set v}$, as (FEP) requires,  proving (A).

To prove (B), recall from the folklore that the members of a closure system $\alg F$ always form a (complete) lattice in which $X\wedge Y=X\cap Y$, $X\vee Y=\clf{X\cup Y}$, and $X\leq Y\iff X\subseteq Y$. In particular,  $\Lat{\wha F}$
is a lattice. 
Assume that $X\prec Y$ in  $\Lat{\wha F}$. With respect to $\leq_P$, take a \emph{minimal} element $u$ of $Y\setminus X$.
Clearly, $\sodeal P u\subseteq X$ and $X < \clf{X\cup \set u}\leq Y$, whence $Y$, which covers $X$, is $\clf{X\cup \set u}$, as required. 
Conversely, assume that $X\in \Lat{\wha F}$, that is, $X\in\alg F$, 
$\sodeal P u\subseteq X$, $u\notin X$, and $Y=\clf{X\cup\set u}$. 
Clearly, $X<Y$. Assume that $Z\in\alg F$ and $X<Z\leq Y$.
Take an arbitrary $v\in Z\setminus X$. Since $v\in Y=\clf{X\cup\set u}$, (FEP) implies that $u\in\clf{X\cup\set v}$. Hence,
$Y=\clf{X\cup\set u}\leq \clf{\clf{X\cup\set v}}=\clf{X\cup\set v}\leq Z$ yields that $Z=Y$, whereby $X\prec Y$ as required. This verifies \eqref{eq:thmBhLtt}.
To prove that $\Lat{\wha F}$ is semimodular, assume that 
$X,Y,Z,U\in \alg F$ such that $Z=X\wedge Y\prec X$ and $U=X\vee Y$.
We need to show that $Y\prec U$. Clearly, we can  assume that $X$ and $Y$ are  incomparable, in notation $X\parallel Y$.
Since $Z\prec X$, \eqref{eq:thmBhLtt} allows us to pick a $q\in X\setminus Z$ such that $\sodeal P q \subseteq Z$ and $X=\clf{Z\cup \set q}$. Then $Z\subseteq Y$ gives that  $\sodeal P q \subseteq Y$ and $U=X\vee Y=\clf{X\cup Y}= \clf{\clf{Z\cup \set q} \cup Y} = \clf{Z\cup \set q \cup Y} = \clf{Y\cup \set q}$. 
We also have that
$q\notin Y$ since otherwise $X=\clf{Z\cup\set q}\subseteq Y$ would contradict that $X\parallel Y$. 
Hence,  \eqref{eq:thmBhLtt} gives that  $Y\prec U$, as required. Thus, $\Lat{\wha F}$ is semimodular, proving (B).

Next, in addition to $\psi$ defined in part (C),  we take the map \[
\psi_1\colon \Lat{\Geom L}\to L\text{ defined by }Y \mapsto \sbigvee L{Y}.
\]
Clearly, both $\psi$ and $\psi_1$ are order-preserving. Since 
each  $x\in L$  is the join of $\Jir L\cap\sideal L x$, we have that 
$\psi_1\circ\psi=\id L$, the \emph{identity map} $L\to L$ defined by $y\mapsto y$. Note that we compose maps from right to left, that is, 
$(\psi_1\circ \psi)(y)=(\psi_1(\psi(y))$. The equality $\psi\circ\psi_1=\id{\Lat{\Geom L}}$ is also easy since any $Y\in  \Lat{\Geom L}$ is of the form $Y=\Jir L\cap\sideal L x$, whence
$\psi_1(Y)=x$ and $\psi(x)=Y$.  Thus $\psi$ and $\psi_1$ are reciprocal isomorphisms, proving part (C).

To prove part (D), we let $K:=\Lat{\wha F}$. Then $P':=\Jir K$, 
$\alg F':=\set{P'\cap \sideal K X: X\in K\text{, that is, }X\in \alg F}$, and $\wha {F'}:=\Geom K$. Then $\Geom{\Lat{\wha F}}=\wha{F'}$. 
We know from (Pr) that $\sodeal P u\in \alg F=K$ for every  $u\in P$. Clearly, $\sodeal P u$ is the only lower cover of $\sideal P u$ in $K$, whence $\gmap(u)=\sideal P u\in\Jir K=:P'$. 
Hence, $\gmap$ is a $P\to P'$ map indeed, and it is clearly order-preserving.  Let $X\in K$, that is, $X\in \alg F$. Using that $X$ is a down-set in $P$ at `` $=^\ast$ '' below,
\[\sbigvee K\set{\sideal P u: u\in X}=\clf{\bigcup\set{\sideal P u: u\in X}}=^\ast \clf X=X.
\]
Thus, each element $X$ of $K$ is the join of elements of the form $\sideal P u=\gmap(u)$. Hence, $\Jir K=\set{\sideal P u: u\in P}
=\set{\gmap(u): u\in P}$ and $\gmap$ is surjective. For $u,v\in P$, if $\gmap(u)=\gmap(v)$, then $u\in \sideal P u=\gmap(u)=\gmap(v)=\sideal P v$ shows that $u\leq_P v$. We obtain similarly that $v\leq _P u$. Thus, $\gmap$ is injective, so it is a bijection. If $\gmap(u)\leq_K\gmap (v)$, then $u\in \sideal P u\subseteq \sideal P v$ gives that $u\leq_P v$, whereby $\gmap^{-1}$ is order-preserving. Thus, $\gmap\colon P\to P'$ is an order isomorphism, and we can turn our attention to $\ogmap\colon \wha F\to \wha{F'}$.

Let $X\in\alg F$. For $u\in P$, $\gmap(u)=\sideal P u\leq_K X$ if and only if $u\in X$ since $X$ is a down-set in $P$. Using that $\Jir K=P'=\gmap(P)$, we obtain that $\ogmap(X)=\set{\gmap(u): u\in X}=
\Jir K\cap\sideal K X\in \alg{F'}$. Hence, $\set{\ogmap(X): X\in\alg F}\subseteq \alg{F'}$. To show the converse inclusion, let $Y\in \alg{F'}$. Then there is an $X\in K=\alg F$ such that 
$Y=\set{H\in \Jir K=P': H\leq_K X}=\set{\gmap(u): u\in P \text{ and } \gmap(u)\leq_K X}$. But ``$\leq_K$'' here means ``$\subseteq$'' and $\gmap(u)=\sideal P u\subseteq X \iff u\in X$ since $X$ is a down-set in $P$. Hence, $Y=\set{\gmap(u): u\in X}=\ogmap(X)$, showing that $\ogmap$ is an isomorphism. This completes the proof of (D) and that of Theorem~\ref{thmfaigle}
\end{proof}

To conclude this section, we formulate and prove the following easy lemma.

\begin{lemma}\label{lemma:smhDf} 
Definitions \ref{def:lFglG} and \ref{def:fgMtry} are equivalent.
\end{lemma}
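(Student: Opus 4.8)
The plan is to show that the two definitions carve out the same pairs $(P,\alg F)$ by proving a biconditional, with the only substantive work being the equivalence of (CP) and (FEP) in the presence of the common axioms (F$\cap$), (F$\dwn$), (Pr). First I would observe that Definition~\ref{def:fgMtry} explicitly requires (F$\cap$) and says ``$\alg F\subseteq\Pow P$'', so in both definitions $\alg F$ is a closure system on $P$ (it contains $P$ and is $\cap$-closed), and hence $\mclf$ is a genuine closure operator whose fixed points are exactly the members of $\alg F$; in particular, for $X\in\alg F$ and $T\supseteq X$, $\clf T$ is the least member of $\alg F$ containing $T$. Both definitions also share (F$\dwn$) and (Pr) verbatim, so it remains only to prove that, assuming (F$\cap$), (F$\dwn$), (Pr), the covering axiom (CP) holds if and only if the Faigle exchange axiom (FEP) holds.

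For the direction (CP)$\then$(FEP): assume (CP), take $u,v\in P$ and $S\in\alg F$ with $u\notin S$, $v\notin S$, $\odeal u\subseteq S$, and $v\in\clf{S\cup\set u}$. By (CP) there is $Y\in\alg F$ with $S\fprec Y$ and $u\in Y$; since $Y\in\alg F$ and $S\cup\set u\subseteq Y$, we get $\clf{S\cup\set u}\subseteq Y$, hence $v\in Y$. Now $S<\clf{S\cup\set v}\subseteq Y$ and $S\fprec Y$ force $\clf{S\cup\set v}=Y\ni u$, which is exactly the conclusion $u\in\clf{S\cup\set v}$ of (FEP). (One must note $\clf{S\cup\set v}$ properly contains $S$ because $v\notin S$.)

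For the converse (FEP)$\then$(CP): assume (FEP), take $u\in P$ and $X\in\alg F$ with $u\notin X$ and $\odeal u\subseteq X$, and set $Y:=\clf{X\cup\set u}\in\alg F$, so $X<Y$. I need $X\fprec Y$. Suppose $Z\in\alg F$ with $X<Z\le Y$; pick $v\in Z\setminus X$. Then $v\in Z\subseteq Y=\clf{X\cup\set u}$, and $u\notin X$, $v\notin X$, $\odeal u\subseteq X$, so (FEP) gives $u\in\clf{X\cup\set v}\subseteq Z$ (using that $X\cup\set v\subseteq Z\in\alg F$). Hence $X\cup\set u\subseteq Z$, so $Y=\clf{X\cup\set u}\subseteq Z$, forcing $Z=Y$; thus $X\fprec Y$, and $u\in Y$, establishing (CP). Finally I would remark that, by Remark~\ref{rem:uniqY}, the $Y$ in (CP) is automatically unique, so the two formulations of (CP) (with ``there exists'' versus ``there exists a unique'') coincide as well; this closes the equivalence.

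I do not anticipate a serious obstacle here — the argument is essentially the same bookkeeping already carried out inside the proof of Theorem~\ref{thmfaigle}(B), where (FEP) was used to derive the covering description \eqref{eq:thmBhLtt}. The one point that needs a moment's care is making sure that in each implication the relevant closure is a \emph{proper} extension of $S$ (resp.\ $X$), which follows because the newly added element ($u$ or $v$) lies outside the closed set in question; everything else is monotonicity and idempotency of $\mclf$ together with the defining property that $\alg F$ consists exactly of the $\mclf$-closed sets.
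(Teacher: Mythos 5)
Your proof is correct and follows essentially the same route as the paper: the (CP)$\then$(FEP) direction is argued identically via the unique cover $Y$ forcing $\clf{S\cup\set u}=Y=\clf{S\cup\set v}$, and your (FEP)$\then$(CP) direction simply inlines the argument that the paper delegates to the already-established covering description \eqref{eq:thmBhLtt} from the proof of Theorem~\ref{thmfaigle}(B).
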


\begin{proof} Let $P$ be a finite poset, $\alg F\subseteq \Pow P$, and assume that $\wha F:=(P,\alg F)$ satisfies (F$\cap$), (F$\dwn$), and (Pr). We need to show that with these assumptions, (FEP) and (CP) are equivalent for $\wha F$.
First, assume that (FEP) holds for $\wha F$. Let $u\in P$ and $X\in \alg F$ such that $u\notin X$ and $\sodeal P u\subseteq X$. 
It follows from \eqref{eq:thmBhLtt} that $Y:=\clf{X\cup\set{u}}$ covers $X$ and contains $u$. Hence, (CP) holds for $\wha F$. 

Second, assume that $\wha F$ satisfies (CP). Let  $u,v\in P$ and $S\in \alg F$ such that $u\notin S$, $v\notin S$, $\odeal u \subseteq S$, and $v\in\clf{S\cup\set u}$. By (CP) and Remark~\ref{rem:uniqY}, 
there is a unique $Y\in \alg F$ such that $u\in Y$ and $S\fprec Y$. 
Since $S\cup\set u\subseteq Y\in\alg F$ gives that
$S\subset \clf{S\cup\set u}\subseteq Y$, the covering $S\fprec Y$ implies that $\clf{S\cup\set u} = Y$. Similarly,  
$S\subset S\cup\set v \subseteq  \clf{S\cup\set u} = Y$ and
 $S\fprec Y$  yield that $\clf{S\cup\set v} = Y$. Thus,
$u\in \clf{S\cup\set u} = Y = \clf{S\cup\set v}$ shows that (FEP) holds for $\wha F$. 
\end{proof}

\begin{remark}\label{rem:nlkLbkSgd}
In what follows, Lemma~\ref{lemma:smhDf} allows us not to make a sharp distinction between Definitions~ \ref{def:lFglG} and \ref{def:fgMtry}. This means that if we are given a Faigle geometry, then we can use both (FEP) and (CP) without  separate explanation. Conversely, to prove that $(P,\alg F)$ is a Faigle geometry, 
it suffices to show that at least one of (FEP) and (CP) holds in addition to (F$\cap$), (F$\dwn$), and (Pr). 
Also, based on Theorem~\ref{thmfaigle}, we will often pass (implicitly sometimes) from  a Faigle geometry to the finite semimodular lattice associated with it or vice versa.
\end{remark}

\section{Two more lemmas}\label{sect:2lemmas}
The \emph{length} of a finite chain $C$ is $|C|-1$. The length  of a lattice $L$, denoted by $\length L$, is the supremum of $\{\length C : C$ is a finite chain in $L\}$. If $\length L\in\Nnul=\set 0\cup\Nplu$, then $L$ is of \emph{finite length}. If $L$ is a semimodular lattice of finite length, then $\length C=\length L$ for every maximal chain $C$ in $L$.  For a lattice $K$ and a nonempty subset $L\subseteq K$, if $L\subseteq K$ and $x\wedge_K y \in L$ for all $x,y\in L$, then 
$L$ is a \emph{meet-subsemilattice} of $K$. 
Note that $L$ can be a lattice with respect to the ordering inherited from $K$ even if $L$ is not a sublattice of $K$.
We need the following lemma, which is a particular case of Wild~\cite[Lemma 1]{wild}.

\begin{lemma}[Wild~\cite{wild}]\label{lemma:nvRgJzsj}
If $L$ and $K$ are semimodular lattices of the same finite length and $L$ is a meet-subsemilattice of $K$, then $L$ is a sublattice of $K$. 
\end{lemma}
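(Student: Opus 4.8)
The statement claims that if $L$ and $K$ are semimodular of the same finite length $n$ and $L$ is a meet-subsemilattice of $K$, then $L$ is actually a sublattice of $K$; since meets already agree, the only thing to prove is that the join $x\vee_K y$ computed in $K$ lands in $L$ and coincides with $x\vee_L y$ whenever $x,y\in L$. My plan is to argue by reducing to the covering case and then using a height (dimension) function on $K$.

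Let me fix a height function $h\colon K\to\{0,1,\dots,n\}$, $h(z)=\length z$ computed in the interval $[\hat 0,z]$; since $K$ is semimodular of finite length, $h$ is well defined on maximal chains and satisfies the submodular inequality $h(x\vee_K y)+h(x\wedge_K y)\le h(x)+h(y)$ for all $x,y\in K$, with equality iff the interval $[x\wedge_K y, x]$ transposes up to $[y,x\vee_K y]$. The same applies inside $L$ with its own height function $h_L$; but because $L$ and $K$ have the \emph{same} length and $L$ is closed under $\wedge_K=\wedge_L$, a key preliminary observation is that $h$ restricted to $L$ equals $h_L$: indeed a maximal chain of $L$ is a chain of $K$ of length $\length L=n=\length K$, hence a maximal chain of $K$, so the height of each $z\in L$ is the same whether measured in $L$ or in $K$. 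In particular, for $x,y\in L$, $x\prec_L y$ (cover in $L$) is equivalent to $x\prec_K y$, and more generally $h(y)-h(x)$ counts covers along any chain in $L$ from $x$ to $y$.

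The heart of the argument is the covering step: \textbf{Claim.} if $x,y\in L$, $x\parallel y$, and $x\wedge_L y\prec_L x$, then $y\prec_K x\vee_K y$ and $x\vee_K y\in L$. To see this, put $z=x\wedge_L y=x\wedge_K y$. In $K$ we have $z\prec_K x$ (by the height remark), so semimodularity of $K$ gives $y\prec_K x\vee_K y$, i.e. $h(x\vee_K y)=h(y)+1$. On the other hand, $x\vee_L y\in L\subseteq K$ is an upper bound of $\{x,y\}$ in $K$, so $x\vee_K y\le_K x\vee_L y$; and $h(x\vee_L y)=h_L(x\vee_L y)\ge h_L(y)+1=h(y)+1=h(x\vee_K y)$ because in the semimodular lattice $L$, $z\prec_L x$ forces $y\prec_L x\vee_L y$. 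Since $x\vee_K y\le_K x\vee_L y$ and they have equal height, they are equal; hence $x\vee_K y=x\vee_L y\in L$, proving the claim. Now for general $x,y\in L$, induct on $h(x)-h(x\wedge_L y)$: choose a maximal chain $x\wedge_L y=x_0\prec_L x_1\prec_L\cdots\prec_L x_m=x$ in $L$; a routine induction, pushing the chain up over $y$ one step at a time and using the claim at each step together with the fact that each $x_i\vee_L y$ lies in $L$, shows $x\vee_K y=x\vee_L y\in L$.

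\textbf{Anticipated main obstacle.} The only genuinely delicate point is the inductive step for non-covering pairs: after replacing $y$ by $x_1\vee_L y$ one must check that the new meet $x\wedge_L(x_1\vee_L y)$ has strictly larger height than $z$ (so the induction parameter drops), and that the one-step situation still satisfies the hypothesis of the Claim (incomparability, plus the relevant cover). This is a standard consequence of semimodularity — the interval $[x_1\vee_L y,\, x\vee_L y]$ in $L$ is the "transpose" of $[z',x]$ for the appropriate $z'$ — but it is the place where one must be careful that everything is taking place simultaneously in $L$ and in $K$, which is exactly what the equal-length hypothesis buys us via the height-agreement remark. Everything else is bookkeeping with the height function.
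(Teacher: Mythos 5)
Your proof is correct and follows essentially the same route as the paper's (which is only sketched there): the equal-length hypothesis makes $L$ a cover-preserving subposet of $K$, semimodularity in both lattices settles the covering case, and the general case follows by induction after refining a maximal chain. The only (harmless) difference is that your covering step assumes $x\wedge y\prec x$ on one side only and you induct on $h(x)-h(x\wedge y)$, whereas the paper assumes covers on both sides and inducts on $\length{[x\wedge y,x]}+\length{[x\wedge y,y]}$; both inductions close correctly.
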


\optional{Similarly to Wild~ \cite[Lemma 1]{wild}, we note that 
neither semimodularity nor the assumption that $\length L=\length K$ can be omitted from Lemma~\ref{lemma:nvRgJzsj}. To  exemplify this, let  $K_1$ be  the direct product of the two-element chain and the three-element chain, and omit the unique join-reducible coatom
of $K_1$ to obtain $L_1$. Also, let $L_2=\set{0_{K_2},a,b,1_{K_2}}$ where $a$ and $b$ are distinct atoms of the eight-element boolean lattice  $K_2$.} 
For the reader's convenience, we outline the proof of Lemma~\ref{lemma:nvRgJzsj}; see Wild~\cite{wild} for more details. We say that \emph{$L$ is a cover-preserving sublattice} of $K$ if $L$ is a sublattice and for any $x,y\in L$, $x\prec_L y\iff x\prec_K y$.

\begin{proof}[Outline of the proof of  Lemma~\ref{lemma:nvRgJzsj}]
If $x\parallel y$ in $L$ and $x\wedge y$ happens to be a lower cover of $x$ and $y$ then, both in $L$ and $K$, the join of $x$ and $y$ covers $x$ and $y$. Since  $L$ is a cover-preserving subposet of $K$, $x\vee_K y=x\vee_L y$. If, say, $x\wedge y\not\prec x$, then
for any $x'\in L$ with $x\wedge y<x'<x$, we have that 
$x\vee y=x\vee (x'\vee y)$. This allows us to use an induction on $\length{[x\wedge y,x]} + \length{[x\wedge y,y]}$.
\end{proof}

Given a lattice $L$ and $\rho\subseteq L\times L$, the congruence generated by $\rho$ will be denoted by $\con_L(\rho)$ or $\con(\rho)$. As usual, the \emph{congruence lattice} of $L$ is denoted by $\Con L$. If  $L$ is a sublattice of a lattice $K$, then 
\begin{equation}
\parbox{7.7cm}{ $\rho_{K,L}\colon \Con{K}\to\Con  {L}$ denotes the
\emph{restriction map} defined by $\Theta\mapsto \restrict\Theta {L}:=\Theta\cap(L\times L)$.
}\label{pbx:klBlmkPNsrKskjjB}
\end{equation}
Following Gr\"atzer and Schmidt~\cite{gr-scht}, $K$ is a \emph{congruence-preserving extension of $L$}
if $\rho_{K,L}$ is an order isomorphism (equivalently, a lattice isomorphism). Note that our definition is clearly equivalent to the one given in \cite{gr-scht}; yet another equivalent definition is that the \emph{extension map} $\Con L\to \Con K$ defined by  $\Theta\mapsto \con_K(\Theta)$ is a lattice isomorphism.

Dually to $\Jir L$, the poset of \emph{meet-irreducible} elements of a lattice $L$ is denoted by $\Mir L$. 
Although the following lemma is known from Cz\'edli~\cite[Lemma 5.4]{czgreprhomo98}, here we are going to prove it more simply and shortly than in  \cite{czgreprhomo98}.

\begin{lemma}[Corner Lemma from Cz\'edli~\cite{czgreprhomo98}]\label{lemma:corner} 
Let $L$ be a sublattice of a lattice $K$ of finite length, and let $a,b,c,d\in K$ such that 
$L=K\setminus\set d$, $a\prec_K c\prec_K b$,  $a\prec_K d\prec_K b$,
$a\in \Mir L$,  $b\in \Jir L$, and $d\in\Jir K\cap\Mir K$. Then $K$ is a congruence-preserving extension of $L$. 
\end{lemma}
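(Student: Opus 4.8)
The plan is to show that the restriction map $\rho_{K,L}\colon\Con K\to\Con L$ from \eqref{pbx:klBlmkPNsrKskjjB} is an order isomorphism. Since $L=K\setminus\set d$ is obtained from $K$ by deleting a single element, and that element is doubly irreducible ($d\in\Jir K\cap\Mir K$) with $a$ its unique lower cover and $b$ its unique upper cover, the situation is very rigid. First I would verify that $L$ really is a sublattice of $K$ and describe the order: for $x,y\in L$ the meet $x\wedge_K y$ and join $x\vee_K y$ cannot equal $d$ unless one of $x,y$ already is $d$ (because $d$ is the only element strictly between $a$ and $b$, and $d$ being join- and meet-irreducible forces any representation $d=x\vee_K y$ or $d=x\wedge_K y$ to be trivial), so $L$ is closed under both operations. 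Moreover the only covering relations of $K$ that get disturbed by deleting $d$ are $a\prec_K d$ and $d\prec_K b$; in $L$ we have instead $a\prec_L b$, while all other covers of $K$ between elements of $L$ survive in $L$, and conversely $a\prec_L b$ is the only ``new'' cover.

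Next I would analyze the restriction map. Injectivity of $\rho_{K,L}$: suppose $\Theta_1,\Theta_2\in\Con K$ restrict to the same congruence on $L$. They agree on all pairs from $L$, so they can differ only on pairs involving $d$. But in a finite-length lattice a congruence is determined by which prime (covering) quotients it collapses, so it suffices to check the quotients $a\prec_K d$ and $d\prec_K b$. I expect the key point to be that $\Theta$ collapses $[a,d]$ (resp.\ $[d,b]$) if and only if it collapses $[a,b]$ in $L$ — more precisely, that $\con_K(a,d)=\con_K(d,b)=\con_K(a,b)$ as congruences of $K$, and these are controlled by the restriction to $L$. Here $a\in\Mir L$ and $b\in\Jir L$ enter: they guarantee that $[a,b]$ is a genuine prime interval of $L$ not artificially split, and (via projectivity/transposition arguments, or via the standard description of congruences of semimodular or general finite lattices through the ``prime-quotient collapse'' relation) that collapsing $a\equiv b$ in $L$ forces collapsing $a\equiv d\equiv b$ in $K$. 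Surjectivity: given any $\Phi\in\Con L$, I would define $\Theta$ on $K$ by putting $d$ in the block of $a$ (equivalently of $b$) if $a\mathrel\Phi b$, and in a singleton block relative to the rest otherwise, i.e.\ $\Theta=\Phi\cup(\text{appropriate pairs with }d)$, and check this is a congruence of $K$; compatibility with $\wedge$ and $\vee$ reduces to finitely many cases because $d$ interacts with other elements only through $a$ and $b$.

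Concretely, the cleanest route is probably: (1) establish $L\le K$ and the covering picture described above; (2) prove $\con_K(a,d)=\con_K(d,b)$, using $d\in\Jir K\cap\Mir K$ and that $a\prec_K c\prec_K b$ gives a second ``side'' $[a,c],[c,b]$ of the covering square $\set{a,c,d,b}$, so $[a,d]$ and $[c,b]$ are transposes and $[a,c]$ and $[d,b]$ are transposes, forcing $\con_K(a,d)=\con_K(c,b)$ and $\con_K(a,c)=\con_K(d,b)$, and then the square relation $\con_K(a,b)=\con_K(a,c)\vee\con_K(c,b)=\con_K(a,d)\vee\con_K(d,b)$ combined with irreducibility of $d$ to collapse everything to a single prime quotient's congruence; (3) deduce that every $\Theta\in\Con K$ is the join of $\con_K$ of its prime quotients lying inside $L$, hence is determined by $\restrict\Theta L$ and lies in the image of the extension map of $\restrict\Theta L$; (4) conclude $\rho_{K,L}$ is a bijection, and being order-preserving with order-preserving inverse (the extension map), it is the required isomorphism.

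The main obstacle I anticipate is step (2)–(3): pinning down exactly why no congruence of $K$ can separate the pair $(a,d)$ from the pair $(a,b)$, i.e.\ why there is no $\Theta\in\Con K$ with $a\mathrel\Theta d$ but $a\not\mathrel\Theta b$ or vice versa. This is where the full strength of the hypotheses $a\in\Mir L$, $b\in\Jir L$, $d\in\Jir K\cap\Mir K$, and the presence of the alternative cover $c$ must all be used; the square $\set{a,c,d,b}$ being a covering square (a ``corner'') is what makes the two one-step extensions of $a\prec_L b$ congruence-equivalent in $K$. Handling this carefully — as opposed to the remaining bookkeeping about $\wedge,\vee$ with $d$, which is routine case-checking — is the real content of the lemma.
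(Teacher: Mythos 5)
Your overall strategy---showing that $\rho_{K,L}$ is a bijection, with injectivity coming from the fact that each new prime quotient of $K$ transposes to an old one, and surjectivity from an explicit one-element extension of a given $\Phi\in\Con L$---is the same as the paper's. However, the identity on which you hang the ``real content'' is false. In the covering square $S=\set{a,c,d,b}$ the transposition relations are $\con_K(a,d)=\con_K(c,b)$ and $\con_K(d,b)=\con_K(a,c)$ (you state these correctly in step (2)), but they do \emph{not} collapse to $\con_K(a,d)=\con_K(d,b)=\con_K(a,b)$. Already in the smallest instance of the lemma---$L$ the three-element chain $a\prec c\prec b$ and $K$ the covering square obtained by adjoining $d$---the congruence of $K$ with blocks $\set{a,d}$ and $\set{c,b}$ collapses $[a,d]$ but neither $[d,b]$ nor $[a,b]$, so the three principal congruences are pairwise distinct. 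Consequently your surjectivity recipe (``put $d$ in the block of $a$, equivalently of $b$, iff $a\mathrel\Phi b$, otherwise leave $d$ a singleton'') fails whenever $\Phi$ collapses exactly one of $[a,c]$, $[c,b]$: if $(c,b)\in\Phi$ but $(a,c)\notin\Phi$, then meeting the pair $(c,b)$ with $d$ forces $(a,d)$ into every congruence of $K$ extending $\Phi$, yet your rule keeps $d$ in a singleton block, so the relation you build is not a congruence and these $\Phi$ are never hit. The correct rule is $d\mathrel\Theta a\iff c\mathrel\Phi b$ and $d\mathrel\Theta b\iff a\mathrel\Phi c$; the paper implements it by restricting $\beta$ to $I=\set{a,c,b}$, extending to the unique congruence $\gamma$ of the four-element square $S$, and taking the transitive closure of $\beta\cup\gamma$.

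A second, smaller gap is that you dismiss as ``routine case-checking'' precisely the part that carries the weight of the hypotheses. Verifying that the transitive closure $\delta$ of $\beta\cup\gamma$ is stable under all translations $x\mapsto u\vee x$ and $x\mapsto u\wedge x$ for $u\in K$, and that it restricts back to $\beta$ on $L$, is where $a\in\Mir L$, $b\in\Jir L$ and $d\in\Jir K\cap\Mir K$ are actually used (for instance, $\sfolter Kd=\sfilter Kb$ reduces joining with $d$ to joining with $b$, and the fact that $c$ is the unique cover of $a$ in $L$ lets one factor joining with $u\notin S$ through joining with $c$); this occupies most of the paper's proof and does not follow merely from ``$d$ interacts with other elements only through $a$ and $b$''.
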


\begin{proof}[Proof of Lemma \ref{lemma:corner}]
Let $S=\set{a,c,d,b}$; it is a 4-element boolean lattice with two ``old'' edges $a\prec c$ and $c\prec b$ and  two ``new edges'', $a\prec d$ and $d\prec b$. (Here $\prec$ stands for $\prec_K$; for elements in $L$ it is the same as $\prec_L$.) Let $I:=S\cap L=\set{a,c,b}$. Since each new edge is transposed to a (unique) old edge of $L$ and since any lattice congruence in a lattice of finite length is determined by the edges it collapses,  $\rho_{K,L}$ is injective. By the same reason, $S$ is a congruence-preserving extension of $I$.

Next, we define a map $\epsilon_{L,K}\colon\Con L\to \Con K$ as follows. Let $\beta\in\Con L$. Since $S$ is a congruence-preserving extension of $I$, the restriction $\restrict \beta I$ extends to a unique congruence $\gamma\in\Con S$. Let $\epsilon_{L,K}(\beta)$  be the transitive closure of $\beta\cup\gamma$. Clearly, both $\rho_{K,L}$ and  $\epsilon_{L,K}$ are order-preserving. It suffices to prove that  
\begin{equation}
\epsilon_{L,K}(\beta)\in\Con K\text{ and }
\rho_{K,L}(\epsilon_{L,K}(\beta))\subseteq \beta\text{ for every }\beta\in\Con L.
\label{eq:mNmrdjshshlkRkthNl}
\end{equation}
Indeed, the converse inclusion in \eqref{eq:mNmrdjshshlkRkthNl} is trivial, whence \eqref{eq:mNmrdjshshlkRkthNl} gives that $\rho_{K,L}$ is surjective, whereby it is an order isomorphism with inverse $\epsilon_{L,K}$. For convenience, we let $\delta:=\epsilon_{L,K}(\beta)$. For $u\in K$, let $\jtr u\colon K\to K$ and $\mtr u\colon K\to K$ be the maps defined by $x\mapsto u\vee x$ and $x\mapsto u\wedge x$, respectively.  For a relation $\mu\subseteq K\times K$, we let $\jtr u(\mu):=\set{(\jtr u(x),\jtr u(y)): (x,y)\in \mu}$; we define $\mtr u(\mu)$ similarly. We claim that 
\begin{equation}
\text{for every $u\in K$, $\,\, \jtr u(\beta\cup\gamma)\subseteq \delta$ and  $\mtr u(\beta\cup\gamma)\subseteq \delta$.}
\label{eq:mHrztKKRnsRpntl}
\end{equation}
By duality, it suffices to show $\jtr u(\beta)\subseteq \delta$
and $\jtr u(\gamma\setminus\delta)\subseteq \delta$.

First, let $(x,y)\in\beta$; in particular, $x,y\in L$. We can assume that $\jtr u(x)\neq \jtr u(y)$. We can also assume that  $u=d$ since otherwise $\beta\in\Con L$ implies that 
$(\jtr u(x),\jtr u(y))\in\beta\subseteq\delta$. Using the rule
$\jtr d(z)=\jtr d(\jtr a(z))$ and that  $\jtr a(\beta)\subseteq \beta$, we can assume that $x,y\in\filter_K a$. 
Since $b$ is the only cover of $d$ in $K$, $\sfolter K d=\sfilter K b=\sfilter L b$. Hence,
either $x=a$ and $\jtr d (x)=d$, or $\jtr d (x)=\jtr b(x)$, and similarly for $\jtr d(y)$. 
But $\jtr a(x)\neq  \jtr a(y)$, and $\jtr b(\beta)\subseteq\beta\subseteq\delta$ since $b\in L$.  Hence, apart from $x$--$y$ symmetry, we only need to deal with the case $x=a$ and $y\in \sfolter L a=\sfilter L c$; this last equality follows from $a\in\Mir L$.  
Since congruence blocks are convex sublattices, $(a,c)=(x,c)\in\beta$ and $(c,y)\in \beta$. Also, $(a,c)\in\gamma$ since  $\restrict \beta I=\gamma$. Thus, $(\jtr d(a),\jtr d(c))=(d,b)\in\gamma\subseteq \delta$. Also, $(\jtr d(c), \jtr d(y)) = (b, \jtr d(y))  =(\jtr b(c), \jtr b(y))\subseteq \beta\subseteq\delta$. 
Hence,  $(\jtr u(x), \jtr u(y))=(\jtr d(x), \jtr d(y))\in\delta$ since $\delta$ is transitive.

Second, let $(x,y)\in\gamma\setminus\beta$. Apart from $x$--$y$ symmetry, either $(x,y)=(a,d)$, or $(x,y)=(d,b)$. 
We can assume that $u\notin S$, since otherwise $\jtr u(\gamma)\subseteq \gamma\subseteq\delta$ applies, and that $u\notin \sideal L a$, since otherwise $(\jtr u(x),\jtr u(y))=(x,y)\in\gamma\subseteq\delta$. Then $\jtr u(z)=\jtr u(a\vee z)=\jtr {u\vee a}(z) =\jtr u(\jtr c(z))$ holds for all $z\in S$ since $c$ is the only cover of $a$ in $L$.  If $(x,y)=(a,d)$, then $(\jtr c(x),\jtr c(y))=(c,b)\in\restrict\gamma I=\restrict \beta I\subseteq \beta$, whereby $(\jtr u(x),\jtr u(y))= (\jtr u(\jtr c(x)),\jtr u(\jtr c(y)))\in\beta\subseteq\delta$ since $\jtr u(\beta)\subseteq\beta$. If $(x,y)=(d,b)$, then $(\jtr u(x),\jtr u(y))\in\delta$ since $\jtr u(x)= \jtr u(\jtr c(x))=b=\jtr u(\jtr c(y))=\jtr u(y)$. This proves \eqref{eq:mHrztKKRnsRpntl}, from which we immediately conclude that 
\begin{equation}
\text{for every $u\in K$, $\,\, \jtr u(\delta)\subseteq \delta$ and  $\mtr u(\delta)\subseteq \delta$.}
\label{eq:fBlPrtLdpncsfHl}
\end{equation}

Clearly, $\delta$ is an equivalence. Assume that $(x,y)$ and $(u,v)$ belong to $\delta$. Applying \eqref{eq:fBlPrtLdpncsfHl}, we obtain that
$(x\vee u, x\vee v)=(\jtr{x}(u),\jtr{x}(v)) \in\delta$ and  
 $(x\vee v, y\vee v) = (\jtr{v}(x),\jtr{v}(y))\in\delta$. Hence, transitivity yields that $(x\vee u, y\vee v)$ belongs to $\delta$. So does $(x\wedge u, y\wedge v)$ by duality, and we conclude that $\epsilon_{L,K}(\beta)=\delta \in \Con K$.

To prove the second half of \eqref{eq:mNmrdjshshlkRkthNl}, let 
$(x,y)\in\rho_{K,L}(\epsilon_{L,K}(\beta))$. That is, $x,y\in L$ and $(x,y)\in\delta$. Take a shortest sequence
$z_0=x, z_1,\dots,z_{k-1}, z_k=y$ such that for $i=1,\dots,k$,
either $(z_{i-1},z_i)\in\beta$ and we call the $i$-th step a \emph{$\beta$-step}, or $(z_{i-1},z_i)\in\gamma\setminus\beta$ and we call the $i$-th step a \emph{$\gamma$-step}. By the minimality of $k$, 
the elements $z_0,\dots,z_k$ are pairwise distinct. 
Since $d\notin L$ and so it cannot take part in a $\beta$-step,
every $\gamma$-step is followed or preceded by another $\gamma$-step. But two consecutive $\gamma$-steps can be replaced by a single $\beta$-step (from $a$ to $b$ or $b$ to $a$). Hence, the minimality of $k$ yields that there is no $\gamma$-step at all, whereby the transitivity of $\beta$ gives that $(x,y)\in\beta$, as required. Thus, \eqref{eq:mNmrdjshshlkRkthNl} holds, completing the proof of Lemma~\ref{lemma:corner}.
\end{proof}

\optional{
\section{Two illustrations}
We present two figures.
Since one of our aims is to give \emph{non-visual} short proofs in the paper, \emph{none of these figures is needed} in our proofs. However, the reader might be interested in the visual aspects of these proofs in case of two small examples given in Figure~\ref{fig1} for Section~\ref{sect:glats} and Figure~\ref{fig2} for Section~\ref{sect:defproof}. In both cases, the semimodularity of $L$ follows easily from, say, Cz\'edli and Schmidt~\cite{czgschtvisual}.
}

\optional{
\begin{figure}[h]
\centerline
{\includegraphics[width=\textwidth]{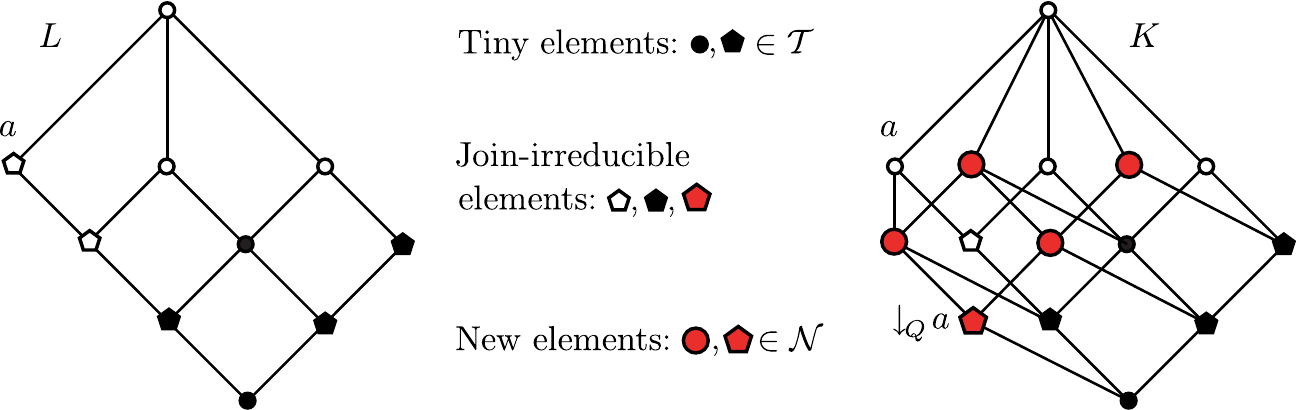}}      
\caption{An illustration for Section~\ref{sect:glats}}\label{fig1}
\end{figure}
}

\section{Extending semimodular lattices to geometric lattices}\label{sect:glats}
For a finite lattice $K$, let $\At K$ denote the \emph{set of atoms} of $K$, that is, the set  of covers of $0_K$.
A finite semimodular lattice $K$ is a \emph{geometric lattice} if $\Jir K=\At K$. Instead of Theorem~\ref{thm:ggkiss}, we are going to proof its stronger form, which was first proved  in Wild~\cite{wild} and was also proved in Cz\'edli and Schmidt~\cite{czgscht2geom}.
\optional{Since any two maximal chains of a finite semimodular lattice have the same number of elements, note that for semimodular lattices, length-preserving embeddings are the same as cover-preserving $\set{0,1}$-embeddings.}

\begin{proposition}[Wild~\cite{wild}]\label{prop:ggkss}
Each finite semimodular lattice $L$ has a length-preserving embedding into a finite geometric lattice $K$ such that $|\At K|=|\Jir L|$.
\end{proposition}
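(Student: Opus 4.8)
The plan is to construct $K$ directly as the lattice of flats of a suitable Faigle geometry on the poset $\Jir L$. Since $\Geom L = (\Jir L, \alg F_L)$ is already a Faigle geometry by Theorem~\ref{thmfaigle}(A), the idea is to \emph{enlarge the closure system} $\alg F_L$ to a new closure system $\alg F'$ on the same poset $P_L = \Jir L$ so that $(P_L,\alg F')$ is still a Faigle geometry but the associated lattice $K := \Lat{(P_L,\alg F')}$ is geometric, i.e. $\Jir K = \At K$. By Theorem~\ref{thmfaigle}(D), $\Jir K$ corresponds to the elements $\sideal{P_L}{u}$ for $u \in P_L$, and an atom of $K$ is a minimal nonempty member of $\alg F'$. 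So ``geometric'' will translate into: every principal down-set $\sideal{P_L}{u}$ is an atom of $\alg F'$, which forces $P_L$ to become an \emph{antichain} in the relevant sense — more precisely, we want the new geometry to have the property that $\sodeal{P_L}{u}$ is always $\emptyset$ as a flat, equivalently that the only flats strictly below $\sideal{P_L}{u}$ collapse to $\emptyset$. The natural choice is therefore to keep the \emph{same} underlying set $\Jir L$ but pretend its elements have ``height one'', i.e. replace $P_L$ by the antichain $A$ on the set $\Jir L$, and take $\alg F'$ to be a closure system on the antichain $A$ that restricts correctly.

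More concretely, here is the construction I would carry out. Let $A$ denote the set $\Jir L$ regarded as an \emph{antichain} (trivial order). Define $\alg F' := \{X \subseteq A : X = \Jir L \cap \sideal L {\bigvee_L X}\}$ — that is, the images under the closure operator $X \mapsto \Jir L \cap \sideal L{\bigvee_L X}$ already used in Theorem~\ref{thmfaigle}(A). One checks: (i) $A = \Jir L \in \alg F'$ and $\alg F'$ is $\cap$-closed (it is exactly the set of fixed points of a closure operator on the set $A$, using that the closure operator in part (A) is a genuine closure operator — meets in $L$ of such flats correspond to intersections); (ii) (F$\dwn$) on the antichain $A$ is vacuous since every subset of an antichain is a down-set; (iii) (Pr): $\emptyset = \Jir L \cap \sideal L 0 \in \alg F'$, each $\sideal A u = \{u\}$ equals $\Jir L \cap \sideal L u$ and hence lies in $\alg F'$, and $\sodeal A u = \emptyset \in \alg F'$; (iv) (CP): given $u \notin X \in \alg F'$ with $\sodeal A u = \emptyset \subseteq X$, the set $Y := \Jir L \cap \sideal L{u \vee \bigvee_L X} = \clfprime{X \cup \{u\}}$ contains $u$, and the covering $X \prec_{\alg F'} Y$ follows from \eqref{eq:thmBhLtt} applied in $(A,\alg F')$ (or directly from semimodularity of $L$ as in the proof of part (A)). Hence $(A,\alg F')$ is a Faigle geometry, and $K := \Lat{(A,\alg F')}$ is a finite semimodular lattice by Theorem~\ref{thmfaigle}(B).

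Next I would verify the three required properties of $K$. \textbf{$K$ is geometric:} by Theorem~\ref{thmfaigle}(D) applied to $(A,\alg F')$, $\Jir K \cong A$ via $u \mapsto \sideal A u = \{u\}$, and each singleton $\{u\}$ is a minimal nonempty element of $\alg F'$ (if $\emptyset \subsetneq Z \subseteq \{u\}$ then $Z = \{u\}$), so $\Jir K = \At K$; hence $K$ is geometric. \textbf{$|\At K| = |\Jir L|$:} the atoms are exactly the singletons $\{u\}$, $u \in \Jir L$, so their number is $|\Jir L|$. \textbf{$L$ embeds length-preservingly into $K$:} define $\phi\colon L \to K$ by $\phi(x) := \Jir L \cap \sideal L x$. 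This lands in $\alg F' = K$ (take $X = \Jir L \cap \sideal L x$; then $\bigvee_L X = x$ since every element of a finite lattice is the join of the join-irreducibles below it, so $X = \Jir L \cap \sideal L{\bigvee_L X}$). It is injective for the same reason, and order-preserving both ways. To see it preserves meets: $\phi(x) \cap \phi(y) = \Jir L \cap \sideal L x \cap \sideal L y = \Jir L \cap \sideal L{x \wedge y} = \phi(x \wedge y)$, so $\phi(L)$ is a meet-subsemilattice of $K$. For length: $\length K = $ (number of atoms in a maximal chain) and each covering $X \prec_{\alg F'} Y$ in $K$ adds exactly one element (by \eqref{eq:thmBhLtt}), so $\length K$ equals the maximal size of a subset $X \in \alg F'$, which is attained at $X = \Jir L$ itself at height... rather, I would argue $\length K = \length L$ directly: a maximal chain $0 = x_0 \prec x_1 \prec \dots \prec x_n = 1$ in $L$ maps to a chain $\phi(x_0) \subsetneq \dots \subsetneq \phi(x_n)$ in $K$ of length $n = \length L$, and semimodularity of $L$ guarantees each step is a covering in $K$ (by \eqref{eq:thmBhLtt}, since $\phi(x_{i}) = \phi(x_{i-1}) \cup \{u\}$ for a suitable $u$ when $x_{i-1} \prec x_i$); conversely $\length K \le |\Jir L| $ need not equal $\length L$ in general — so actually the cleaner route is: once $\phi$ is a length-preserving embedding we are done. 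Finally, by Lemma~\ref{lemma:nvRgJzsj}, since $\phi(L)$ and $K$ are semimodular of the same finite length and $\phi(L)$ is a meet-subsemilattice of $K$, $\phi(L)$ is a sublattice of $K$; hence $\phi$ is a lattice embedding.

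The main obstacle is the length computation: I must show $\length K = \length L$, not merely $\length K \ge \length L$. The point is that although $K$ may have many more elements than $L$, a maximal chain of $K$ still has length equal to the maximal cardinality of a flat in $\alg F'$, and the top flat is $\Jir L$; but the maximal chains of $L$ already realize chains in $K$ whose union grows by one join-irreducible at a time and terminates at $\phi(1) = \Jir L$, so every maximal chain of $K$ and every (image of a) maximal chain of $L$ have length $|\Jir L| - $ (the ``defect''), and semimodularity of $K$ makes all maximal chains equal; matching these two gives $\length K = \length L$. Making this last matching airtight — essentially that every maximal chain of $\alg F'$ starting at $\emptyset$ and ending at $\Jir L$ passes through flats $\phi(x_0) \subset \phi(x_1) \subset \cdots$ for a maximal chain of $L$, because a cover in $\alg F'$ corresponds via \eqref{eq:thmBhLtt} to a cover in $L$ — is the one step deserving care.
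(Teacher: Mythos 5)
Your construction does not work, and the failure is structural rather than a matter of detail. By definition \eqref{eq:ksmbmghgnKszGTrz}, the lattice of flats $\Lat{(P,\alg F)}=(\alg F,\subseteq)$ depends \emph{only} on the closure system $\alg F$, not on the order of the underlying poset. Your $\alg F'$, the set of fixed points of $X\mapsto \Jir L\cap\sideal L{\sbigvee L X}$, is exactly $\alg F_L=\set{\Jir L\cap\sideal L x: x\in L}$; you have not added a single new flat. Hence $K=(\alg F',\subseteq)=(\alg F_L,\subseteq)\cong L$ by Theorem~\ref{thmfaigle}(C), and $K$ is geometric only if $L$ already was. Declaring the poset to be an antichain changes nothing downstream. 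Concretely, the pair $(A,\alg F_L)$ is not a Faigle geometry when $L$ is not geometric: for a non-atom $u\in\Jir L$ the set $\sideal A u=\set u$ is \emph{not} equal to $\Jir L\cap\sideal L u$ (the latter contains every join-irreducible below $u$), so (Pr) already fails; and (CP) fails for $X=\emptyset$ and such a $u$, since every flat containing $u$ contains $\Jir L\cap\sideal L u$ and therefore does not cover $\emptyset$. Your appeal to \eqref{eq:thmBhLtt} to get $X\prec\clf{X\cup\set u}$ is circular: that equivalence is a consequence of (FEP) for the structure in question, which is precisely what is in doubt (and false here). Already for $L$ a three-element chain your $K$ is again the three-element chain, not $M_0$-like or Boolean.

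The idea you announce at the outset --- \emph{enlarge} the closure system --- is the right one, but your definition of $\alg F'$ does not implement it; making every $\set u$ closed in one stroke would require adding many new sets and there is no reason the result is $\cap$-closed or satisfies (CP). The paper instead proceeds one join-irreducible at a time: pick a \emph{maximal} $a\in\Jir L\setminus\At L$, detach only $a$ from the order to form $Q$, and adjoin the genuinely new flats $\alg N=\set{X\cup\set a: X\in\alg T}$, where $\alg T$ consists of the flats $X$ with $a\notin X$ none of whose upper covers contains $a$. The substance of the proof is the verification of (CP) for $\alg F\cup\alg N$ (using semimodularity of $L$ twice) and the check that a maximal chain of $L$ avoiding $\alg N$ survives as a maximal chain of the new lattice, so that Lemma~\ref{lemma:nvRgJzsj} applies; the process is then iterated. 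None of this work appears in your proposal, so the gap is not just the length computation you flag at the end but the construction itself.
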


\begin{proof}
Assume that $L$ is not geometric. Pick a maximal element $a$  of $\Jir L\setminus \At L$. Let $\Geom L=\wha F=(P,\alg F)=(P_L,\alg F_L)=(\Jir L,\alg F_L)$ from \eqref{eq:mtcnzsVMfgTpnLnH}. Define a poset $Q=(Q,\leq_Q)$ with underlying set $Q:=P$ such that $x <_Q y\iff
(a\notin\set{x,y}\text{ and }x<_P y)$. 
Let $\alg T:=\{X\in \alg F: a\notin X$ and, for all $Y\in\alg F$,  $X\fprec Y\then a\notin Y\}$,  $\alg N:=\set{X\cup\set a: X\in\alg T}$, $\alg G:=\alg F\cup \alg N$, and $\wha G:=(Q,\alg G)$. We claim that
\begin{equation}
\parbox{8cm}{$\alg F\cap\alg N=\emptyset$. Also, if $X\in\alg N\cup\alg T$ and $X\supseteq Y\in \alg G$, then $Y$ is in $\alg N\cup\alg T$, and it is even in $\alg T$ if $X\in\alg T$.}
\label{eq:kMnTszLGnCskHvgrv}
\end{equation}
If $E=D\cup\set a\in\alg F\cap\alg N$, then $D\in\alg T$, $\sodeal P a\subseteq D$ by (F$\dwn$), and either $D\fprec \clf{D\cup\set a}\ni a$ by \eqref{eq:thmBhLtt} or   $a\in D$, contradicting $D\in \alg T$. Thus,  $\alg F\cap\alg N=\emptyset$. 
Assume that $Y'\subseteq X'\in \alg T$. Then $a\notin Y'$. If $a\in\clf{Y'\cup\set u}$ and $\sodeal P u \subseteq Y'$, then either $u\notin X'$ and \eqref{eq:thmBhLtt} gives that   $X'\fprec \clf{X'\cup\set u}\supseteq \clf{Y'\cup\set u}\ni a$, 
or $u\in X'$ and $a\in \clf{Y'\cup\set u}\subseteq \clf{X'}=X'$, contradicting $X'\in \alg T$. Hence, $Y'\in\alg T$.
If $X= X'\cup\set a \in \alg N$ and $X\supseteq Y'\in \alg F$, then
$a\notin Y'$ since otherwise $\sodeal P a\subseteq X'\in\alg T$ would contradict \eqref{eq:thmBhLtt} (with $a$ playing the role of $u$), whereby
$Y'\subseteq X'$ and $Y'\in\alg T$ as previously. Finally, if 
$X= X'\cup\set a \in \alg N$ and $X\supseteq Y= Y'\cup\set a\in\alg N$ then, again, $X'\supseteq Y'\in \alg T$, whence $Y\in\alg N$. Thus, \eqref{eq:kMnTszLGnCskHvgrv} holds. 
It follows easily from \eqref{eq:kMnTszLGnCskHvgrv} that  (F$\cap$) and (F$\dwn$) hold in $\wha G$. 
For $u\in Q\setminus\set a$,  
$\set{\sodeal Q u,\sideal P u}=\set{\sodeal P u,\sideal P u}\subseteq\alg F\subseteq \alg G$. Also,
$\sideal Q a=\emptyset \cup \set a\in \alg G$ and $\sodeal Q a=\emptyset \in\alg G$. Hence, (Pr) holds in $\wha G$. 

To prove that $\wha G$ satisfies (CP), assume that $D\in\alg G$, $u\in Q\setminus D$, and $\sodeal Q u\subseteq D$. We need to find an $E\in\alg G$ such that $D\gprec E\ni u$. 
First, let $u=a$. Then $D\in\alg F$ since $a=u\notin D$. If $D\in\alg T$, then we can let $E:=D\cup\set a$. Let $D\in\alg F\setminus\alg T$. Then 
$D\fprec\clf{D\cup\set a}=:E\ni a$ by \eqref{eq:thmBhLtt}, and  \eqref{eq:kMnTszLGnCskHvgrv} gives that $D\gprec E$. 

In the rest of the proof, we assume that $u\neq a$. Clearly, $a\notin\sideal P u$. 
 If $D\in\alg F$, then 
 $\sodeal P u=\sodeal Q u\subseteq D$, and
(CP) applied to $\wha F$ yields an $E\in\alg F$ such that  $D\fprec E\ni u$. If $D\in \alg F\setminus \alg T$, then  \eqref{eq:kMnTszLGnCskHvgrv} implies that $D\gprec E$, as required. If $D\in \alg T$, then $a\notin E$, whence no member of $\alg N$ is a subset of $E$, and $D\gprec E$ again, as required. 

Hence, we can assume that $D=X\cup\set{a}\in \alg N$ with $X\in\alg T$. Since $a\notin\sodeal P u$,
(CP) and  $\sodeal P u=\sodeal Q u \subseteq X$ yields a $Y\in \alg F$ with $X\fprec Y\ni u$. We can assume that $Y\notin\alg T$ since otherwise $D=X \cup\set{a}\gprec Y\cup\set{a}\in \alg N$ and $u\in Y\cup\set{a}$, as required. 
Since $Y\in\alg F\setminus \alg T$, there is an $E\in\alg F$ such that $Y\fprec E\ni a$. By $u\in Y$ and \eqref{eq:kMnTszLGnCskHvgrv},  $u\in E\supset D$. For the sake of contradiction, suppose that  $D\gprec E$ fails, and pick an $H\in \alg G$ such that $D\gprec H\subset E$. 
First, assume that $H\in \alg F$. Since $a\in D\subset H$ and $X\in \alg T$, we obtain that $X\not\fprec H$ while $X\subset D\subset H$ gives that $X<_{\alg F}H$. Hence, $X<_{\alg F} H'<_{\alg F} H<_{\alg F}E$ for some $H'\in \alg F$, contradicting $X\fprec Y\fprec E$ by the semimodularity of $L$. Hence, $H=Z\cup\set a\in\alg N$. Here $Z\in\alg T$, whence $Z\not\fprec E$. But $Z\subset E$, so we can pick a $Z'\in\alg F$ with $X<_{\alg F}Z<_{\alg F} Z'<_{\alg F}E$, which gives the previous contradiction. Thus, $D\gprec E$ and $\wha G$ satisfies (CP). Hence, $\wha G$ is a Faigle geometry. 
Let $K:=\Lat{\wha G}$; it is a semimodular lattice by Theorem~\ref{thmfaigle}(B). Clearly, $L$ is a meet-subsemilattice of $K$. Let $\Gamma_0$ and $\Gamma_1$ be  maximal chains in the intervals $[\emptyset,\sodeal P a]_{L}$ and $[\sideal P a,P]_{L}$ of 
$\alg F=L$, respectively. Then $\Gamma:=\Gamma_0\cup\Gamma_1$ is a maximal chain of $L$. 
Observe that  $[\emptyset,\sodeal P a]_{L}\cap\alg N=\emptyset$ since the members of $\alg N$ contain $a$. Also, $[\sideal P a,P]_{L}\cap\alg N=\emptyset$ since otherwise \eqref{eq:kMnTszLGnCskHvgrv} would imply that $\sodeal P a\in \alg T$, contradicting $\sodeal P a\fprec\sideal P a\ni a$. Hence, $\Gamma$ is also a maximal chain of $K$ and $\length K=\length L$. By Lemma \ref{lemma:nvRgJzsj}, $L$ is a sublattice of $K$. 
Finally, $\Jir K\cong Q$ by Theorem~\ref{thmfaigle}(D), whence $k:=|\set{(x,y): x\leq_{\Jir K} y }|=|\set{(x,y): x\leq_{Q} y }| < |\set{(x,y): x\leq_{P} y }|=|\set{(x,y): x\leq_{\Jir L} y }|$. If $k\neq 0$, then we repeat the construction with $K$ in place of $L$. In a finite number of steps, $k$  becomes 0; then $K$ is a geometric lattice, completing the proof of Proposition~\ref{prop:ggkss}.
\end{proof}

\section{Some definitions and proving Theorem~\ref{thm:gknapp}}
\label{sect:defproof}

\optional{The \emph{width}  of a finite poset $P$ is the maximum of the sizes of its antichains. By  Dilworth \cite[Theorem  1.1]{dilworth}, $\width P$ is the least $n\in\Nplu:=\set{1,2,3\dots}$ such that $P$ is  the union of $n$ (not necessarily disjoint) chains.
A lattice $L$ is \emph{slim} if it is finite and $\width{\Jir L}\leq 2$. Note that slim lattices are planar; see Cz\'edli and Schmidt~\cite[Proposition 5]{czgschtvisual}. Following \cite{czgschtvisual}, a finite semimodular lattice $L$ is said to be a \emph{slim rectangular lattice} if $\Jir L$ is the union of two chains $C$ and $D$ such that for all $c\in C$ and $d\in D$, the elements $c$ and $d$ are incomparable; chains are nonempty by definition. Our definition of slim semimodular lattices and that of slim rectangular lattices are non-visual and do not refer to (Hasse) diagrams. We know from Cz\'edli and Schmidt~\cite{czgschtvisual} that these definitions are equivalent to the visual original ones  
given by Gr\"atzer and Knapp~\cite{gratzerknapp1} and \cite{gratzerknapp3}. 
}

\optional{
\begin{figure}[h]
\centerline
{\includegraphics[width=\textwidth]{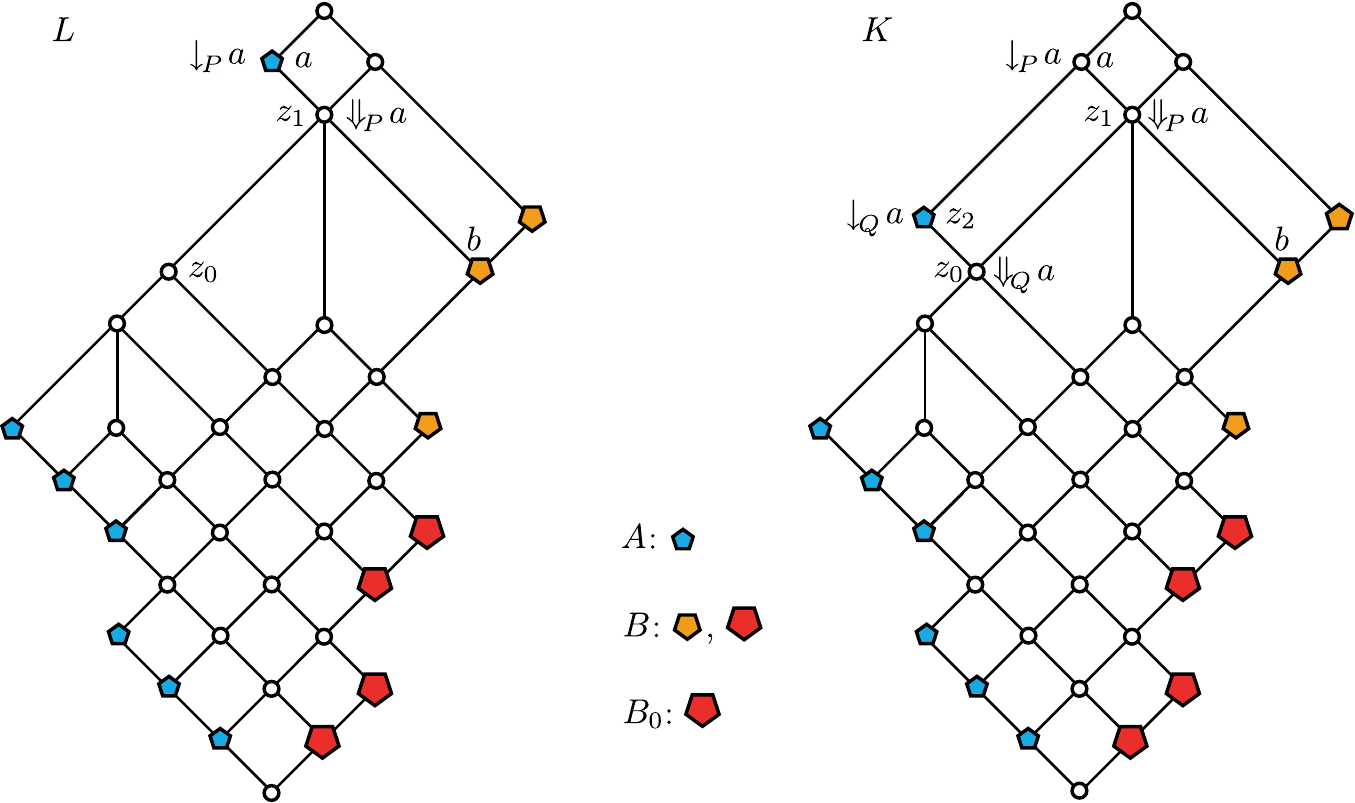}}      
\caption{An illustration for Section~\ref{sect:defproof}}\label{fig2}
\end{figure}
}

\begin{proof}[Proof of Theorem~\ref{thm:gknapp}]
Let $M_1 \leq M_2\leq M_3$ be lattices where $\leq$ stands for ``sublattice of''. The rule  $\rho_{M_2,M_1}\circ\rho_{M_3,M_2}=\rho_{M_3,M_1}$ shows that ``congruence-preserving extension'' is a transitive relation among lattices. This allows us to prove the theorem ``step-by-step'', getting closer to a slim rectangular lattice at each step. 
Let $L$ be a slim semimodular lattice with $|L|\geq 3$. As the first step, we can assume that $L$ is not a chain since otherwise Lemma~\ref{lemma:corner} allows us to replace $L$ by an $(|L|+1)$-element congruence-preserving extension of the same length, which is not a chain but slim and semimodular (in fact, distributive). 
Since $L$ is slim and it is not a chain, $\Jir L$ is the union of two \emph{disjoint} chains, $C$ and $D$. (Replace $D$ by $D\setminus C$ if necessary.)  Let $\delta_{C,D}:=|\set{(c,d)\in C\times D: c\nonparallel d}|$, where $\nonparallel$ means ``comparable'',  and $\delta(L):=\min\{\delta_{C,D}:  C$  and $D$ are disjoint chains with $C\cup D=\Jir {L}\}$. Since $\delta(L)=0$ if and only if $L$ is rectangular, it suffices to find a slim semimodular congruence-preserving extension $K$ of $L$ such that $\delta(K)<\delta(L)$, if $\delta(L)>0$.

Assume that $\delta(L)>0$ and this is witnessed by disjoint chains $A$ and $B$ in $\Jir L$. Then $\Jir L=A\cup B$ and $\delta(A,B)=\delta (L)$. Let $\Geom L=\wha F=(P,\alg F)=(\Jir L,\alg F)$  be the Faigle geometry associated with $L$; see \eqref{eq:mtcnzsVMfgTpnLnH}. Since $A$ and $B$ play a symmetrical role, we can pick an $a\in A$ and a $b\in B$ such that $a$ covers $b$ in $P$, in notation, $b\prec_P a$. Understanding the $\bigvee$ and $<$ in $L$, we let 
\begin{align}
&z_1:=\bigvee \sodeal P a,\quad   B_0:=\set{y\in B: y\vee \bigvee \sodeal A a < z_1},\,\,\text{ and observe that}
\label{eq:mprszHtzPrcnhKfSn}\\
&\sodeal P a = \sodeal A a\cup\sideal B b\qquad \text{ and }\qquad b\notin B_0. 
\label{eq:wqGrmTngkmgrRlcrD}
\end{align}
On the underlying set of $P=\Jir L$, we define a new poset $Q=(\Jir L,\leq_Q)$ by letting
\begin{equation}y<_Q x \defiff \left\{
\parbox{6cm}{
$y<_P x\neq a$, \quad or \\
$y<_P x = a$ \text{ and } $y\in B_0\cup \sodeal A a$.}\right.
\label{pbxDfQrrh}
\end{equation}
Since $B_0\subseteq \sideal P a$,  ``$y<_P$'' can be omitted  from ``$y<_P x = a$''. Using that $B_0\cup\sodeal A a$ is a down-set of $P$, it follows that $\leq_Q$ is a (partial) ordering. Clearly, for $d\in P\setminus\set a$, 
\begin{align}
&\text{$\sideal Q d=\sideal P d$, \ 
$\sodeal Q d=\sodeal P d$, \ $\sideal Q a\subset\sideal P a$, \ and  $\sodeal Q a\subset\sodeal P a$.}
\label{txt:nmTszbZwkc}\\
&\parbox{7.7cm}{Note that $A$ and $B$ are chains both in $Q$ and $P$. Also, ``$\restrict {\leq_P}A$'' = ``$\restrict {\leq_Q}A$", and ``$\restrict {\leq_P}B$'' = ``$\restrict {\leq_Q}B$".}
\label{pbx:pPnvTnmRmHthSz}
\end{align}
Hence, say, $\sodeal A a$ makes sense. 
We define the required  lattice as $K:=\Lat{\wha G}$ where 
\begin{equation}
\alg G:= \alg F\cup\set{\sideal Q a} 
\qquad\text{ and }\qquad \wha G:=(Q,\alg G).
\label{eq:htlBmfnNmkdLkzstRpt}
\end{equation}
Of course, we need to show that $\wha G$ is a Faigle geometry. We claim that 
\begin{equation}
\parbox{9.4cm}{$
\sideal Q a=\sideal A a\cup B_0$,  $\,\,\sodeal Q a=\sodeal A a\cup B_0\in \alg F$,  $\,\,z_0:=\bigvee\sodeal Q a\prec_L z_1$, and
$z_1$ is the only cover of $z_0$ in $L$.
}
\label{eq:hfmHsTclkTmhR}
\end{equation}
The equalities in  \eqref{eq:hfmHsTclkTmhR} are clear by  \eqref{pbxDfQrrh}. Since $B_0$ is a (possibly empty) subset of a chain, ``$<$'' in place of ``$\prec_L$'' follows from \eqref{eq:mprszHtzPrcnhKfSn}. For any $X'\in[\sodeal Q a, \sodeal P a]_{\alg F}$, we have that $X'\cap A=\sodeal A a$ by \eqref{eq:wqGrmTngkmgrRlcrD} and the equalities in \eqref{eq:hfmHsTclkTmhR}, whence ``$\prec_L$''  follows from \eqref{eq:mprszHtzPrcnhKfSn}, and we also obtain that $z_1$ is the only cover of $z_0$ in $L$. 
By \eqref{eq:mtcnzsVMfgTpnLnH}, to obtain that 
$\sodeal Q a=\sodeal A a\cup B_0\in\alg F$, it suffices to show that $\sodeal Q a=P\cap\sideal L{z_0}$. 
 The inclusion $\sodeal Q a\subseteq P\cap\sideal L {z_0}$ is clear. 
Conversely, let $c\in  P\cap\sideal L {z_0}$. Then $c=a'\in A$ or $c=b'\in B$. By \eqref{txt:nmTszbZwkc}, $z_0 <\bigvee \sideal P a=a$ since $a\in\Jir L$. Hence, $a'\leq z_0$ implies that $a'\in\sodeal A a\subseteq \sodeal Q a$. By \eqref{eq:mprszHtzPrcnhKfSn}, $b'\leq z_0<z_1$ gives that $b'\in B_0 \subseteq \sodeal Q a$, proving \eqref{eq:hfmHsTclkTmhR}.

It follows from \eqref{txt:nmTszbZwkc} and \eqref{eq:hfmHsTclkTmhR} that $\wha G$ satisfies (F$\dwn$) and (Pr) since so does $\wha F$. For $X\in\alg F$, if $a\in X$, then $\sideal Q a\subseteq \sideal P a\subseteq X$ shows that $X\cap \sideal Q a=\sideal Q a\in\alg G$.
If $a\notin X$, then \eqref{eq:hfmHsTclkTmhR} gives that 
$X\cap \sideal Q a=X\cap \sodeal Q a\in\alg F\subseteq \alg G$. Thus, $\wha G$ satisfies (F$\cap$) since so does $\wha F$. 
In particular, $K$ is a lattice. We claim that $L$ is a cover-preserving subposet of $K$. For the sake of contradiction, suppose the contrary. 
Then, since $\alg G\setminus\alg F=\set{\sideal Q a}$,  $X_1\fprec X_2$ for some $X_1,X_2\in\alg F$ but 
$X_1\subset \sideal Q a \subset X_2$. From $X_1\subset\sideal Q a$ and \eqref{eq:hfmHsTclkTmhR}, we have that $X_1\subseteq \sodeal Q a\in\alg F$. Since (F$\dwn$) and (Pr) hold in $\wha F$ and $a\in \sideal Q a \subset X_2$, we have that $\sideal P a\subseteq X_2$. 
Since $b\in\sodeal P a\setminus\sodeal Q a$,  we obtain that $\sodeal Q a\subset \sodeal P a\in\alg F$. So $X_1\subseteq \sodeal Q a \subset \sodeal P a\subset \sideal P a\subseteq X_2$ with $\set{\sodeal Q a, \sodeal P a,\sideal P a}\subseteq \alg F$, contradicting that $X_1\fprec X_2$. Thus, $L$ is a cover-preserving subposet of $K$.

Next, to show  that $\alg G$ satisfies (CP), assume that $u\in Q$, $X\in\alg G$, $u\notin X$, and $\sodeal Q u\subseteq X$. We need to find a $Y\in \alg G$ such that $X\gprec Y\ni u$. There are three cases.

First, if $X\in\alg F$ and  $u\neq a$, then $\sodeal P u=\sodeal Q u\subseteq X$ by \eqref{txt:nmTszbZwkc}, so there is a $Y\in \alg F$ with $X\fprec Y\ni u$. Then  $X\gprec Y\ni u$ since $L$ is a cover-preserving subposet of $K$.

Second, let $X\in \alg F$ and $u=a$. Then  $\sodeal Q a\subseteq X$. If $X=\sodeal Q a$, then we can clearly let $Y:=\sideal Q a$. So we can assume that $\sodeal A a\cup B_0 = \sodeal Q a\subset  X$.
Since $a=u\notin X$ and $X$ is a down-set in $P$, $X\cap\sfilter P a=\emptyset$. Hence, there is a down-set
$B'$ of $B$  such that  $B'\not\subseteq B_0$ and $X=\sodeal Q a \cup B'$. The down-sets of the chain $B$ are comparable, whence  $B_0\subset B'$. Hence \eqref{eq:hfmHsTclkTmhR} and \eqref{eq:mprszHtzPrcnhKfSn} give that $X=\sodeal A a\cup B'$ and  $z_1\leq \bigvee X$. Thus,  Theorem~\ref{thmfaigle} yields that $\sodeal P a\subseteq X$. Since $\wha F$ satisfies (CP), we obtain a $Y\in \alg F$ with $X\fprec Y\ni a=u$, and $X\gprec Y$ as  $L$ is a cover-preserving subposet of $K$.

Third, let $X\in \alg G\setminus\alg F$, that is, $X=\sideal Q a$.
Since $u\notin X$ but $\sideal A a\subseteq X$, we have that 
 $u\notin \sideal A a$. Also,  $u\notin\sfolter A a$ since otherwise $b\in\sideal P a\subseteq \sodeal P u=\sodeal Q u\subseteq X=\sideal Q a$ would be a contradiction. Hence, $u\notin A$ and so $u\in B$. If we had that $u>_B b$, then $b\in\sodeal Q u\subseteq X=\sideal Q a$ would be the same contradiction as before. 
Hence, $u\in\sideal B b\subseteq\sideal P a=:Y$. Clearly, $\sideal Q a\gprec \sideal P a$. This completes the third case. Thus, $\wha G$ is a Faigle geometry. Hence, $K$ is semimodular. Since $L$ is a cover-preserving subposet of $K$, $\length K=\length L$. Trivially, $L$ is a meet-subsemilattice of $K$. Thus, Lemma~\ref{lemma:nvRgJzsj} implies that $L$ is a sublattice of $K$.

Implicitly, the last part of the proof often uses the canonical correspondence formulated in Theorem~\ref{thmfaigle}. 
From  \eqref{eq:mprszHtzPrcnhKfSn}, \eqref{eq:hfmHsTclkTmhR}, 
$|\sideal P a\setminus\sodeal P a|=1$, and the fact that $L$ is a cover-preserving sublattice of $K$,  we obtain that $z_0\prec_K z_1\prec_K a$. Let  $z_2:=\sideal Q a\in K$. Since $\sideal P a$ is clearly the only cover of $\sideal Q a$ in $\alg G$, $a$ is the only cover of $z_2$ in $K$. Thus,  $z_2\in\Mir K$. Also, $\sodeal Q a$ is the only lower cover of $\sideal Q a$, whence $z_0\prec_K z_2$ and  $z_2\in\Jir K$. So $z_2\in \Jir K\cap\Mir K$.  We know that $a\in \Jir L$ while  \eqref{eq:hfmHsTclkTmhR} gives that    $z_0\in\Mir L$. By these facts, Lemma~\ref{lemma:corner} applies with $(z_0,z_1,z_2,a)$  playing the role of $(a,c,d,b)$ and completes the proof of Theorem~\ref{thm:gknapp}.
\end{proof}


\color{black!60!green}

\section{Appendix: bibliography of slim or planar semimodular lattices}
\label{sect:addbibl}

To help future research and to serve as a ``reference paper'',
we present a list\footnote{See \quad \texttt{www.math.u-szeged.hu/\textasciitilde{}czedli/m/listak/publ-psml.pdf} \quad for possible updates}
of publications on the class of planar semimodular lattices, where the lattices occurring in Theorem~\ref{thm:gknapp} belong.
The previous sections make no direct reference to this ``additional bibliography'' below, which consists of those publications that 
\begin{itemize}
\item deal with slim semimodular or planar semimodular lattices, or
\item generalize, use, or enumerate slim semimodular lattices.
\end{itemize}
Note that in the study of planar semimodular lattices, the slim ones and the slim rectangular lattices have always played a distinguished role.

\renewcommand{\refname}{Additional bibliography}

\makeatletter
\renewcommand\@biblabel[1]{[+#1]}
\makeatother


\renewcommand{\refname}{References}

\makeatletter
\renewcommand\@biblabel[1]{[#1]}
\makeatother

\color{black}

\end{document}